\newtheorem{Theorem}{Theorem}[section]
\newtheorem{Proposition}[Theorem]{Proposition}
\theoremstyle{definition}
\newtheorem{Remark}[Theorem]{Remark}
\newcommand\ie{i.e.\ }
\newcommand \al{\alpha}
\newcommand\be{\beta}
\newcommand\ga{\gamma}
\newcommand\de{\delta}
\newcommand\ze{\zeta}
\newcommand\et{\eta}
\renewcommand\th{\theta}
\newcommand\ka{\kappa}
\newcommand\la{\lambda}
\newcommand\ta{\tau}
\newcommand\ph{\varphi}
\newcommand\ps{\psi}
\newcommand\om{\omega}
\newcommand\Th{\Theta}
\newcommand\Om{\Omega}
\newcommand\ham{\rm{ham}}
\newcommand\Diff{\text{\rm Diff}}
\newcommand\vol{\text{\rm vol}}
\newcommand\OGr{\text{\rm Gr}}
\newcommand\Gr{\text{\rm Gr}}
\newcommand\ex{\text{\rm ex}}
\newcommand\dR{\text{\rm dR}}
\newcommand{\oo}{\infty}
\newcommand\g{\mathfrak g}
\newcommand\M{\mathcal M}
\newcommand\X{\mathfrak X}
\newcommand\z{\mathfrak z}
\newcommand\R{\mathbb R}
\newcommand\Z{\mathbb Z}
\begin{document}

\title
{Central extensions of Lie algebras of \\symplectic and divergence free vector fields}
\date{}
\author{Bas Janssens and Cornelia Vizman}

\maketitle

\begin{abstract}
In this review paper, we present several results on central extensions of the Lie algebra of symplectic (Hamiltonian) vector fields, and compare them to similar results for the Lie algebra of (exact) divergence free vector fields.
In particular, we comment on universal central extensions and integrability to the group level.
\end{abstract}


\section{Perfect commutator ideals}
The Lie algebras of symplectic and divergence free vector fields both have perfect commutator ideals: the Lie algebras of \emph{Hamiltonian} and \emph{exact divergence free} vector 
fields, respectively.

\subsection{Symplectic vector fields}
Let $(M,\omega)$ be a compact\footnote{%
The compactness assumption mainly serves to simplify the comparison; 
most results at the Lie algebra level extend to noncompact manifolds.
},
connected, symplectic manifold of dimension $2n$. 
Then the Lie algebra of \emph{symplectic vector fields} is denoted by
\[\X(M,\om) := \{X \in \X(M) \,;\, L_{X}\omega = 0\}\,.\]
The \emph{Hamiltonian vector field} $X_{f}$ of
$f \in C^{\infty}(M)$ is
the unique vector field such that $i_{X_{f}} \omega = -df$.
Since the Lie bracket of $X,Y \in \X(M,\om)$
is Hamiltonian with $i_{[X,Y]}\omega = -d\omega(X,Y)$,
the Lie algebra of Hamiltonian vector fields
\[
\X_{\ham}(M,\om) := \{X_{f}\,;\, f \in C^{\infty}(M)\}
\]
is an ideal in $\X(M,\om)$.
The map $X \mapsto [i_{X}\omega]$ identifies the
quotient of $\X(M,\om)$ by $\X_{\ham}(M,\om)$ with the abelian Lie algebra $H_{\mathrm{dR}}^{1}(M)$,
\begin{equation}\label{sesham}
0 \rightarrow \X_{\ham}(M,\om) \rightarrow \X(M,\om) \rightarrow H_{\mathrm{dR}}^{1}(M) \rightarrow 0\,.
\end{equation}

\begin{Proposition}\label{hamp} {\rm (\cite{Calabi70})} 
The 
Lie algebra $\X_{\ham}(M,\om)$ of Hamiltonian vector fields is perfect,
and coincides with the commutator Lie algebra of the Lie algebra
$\X(M,\om)$ of symplectic vector fields. 
\end{Proposition}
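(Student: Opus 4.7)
The plan is to establish the two inclusions
\[
[\X(M,\om), \X(M,\om)] \subseteq \X_{\ham}(M,\om) \subseteq [\X_{\ham}(M,\om), \X_{\ham}(M,\om)],
\]
which together imply both perfectness of $\X_{\ham}(M,\om)$ and its identification with the commutator subalgebra of $\X(M,\om)$. The first inclusion is immediate from the formula $i_{[X,Y]}\om = -d\om(X,Y)$ already noted in the excerpt, which exhibits $[X,Y]$ as $X_{\om(X,Y)}$. For the second, using the identity $[X_f, X_g] = X_{\{f,g\}}$ with Poisson bracket $\{f,g\} := \om(X_f, X_g)$, and the fact that $f \mapsto X_f$ has kernel equal to the constants on the connected manifold $M$, the problem reduces to showing that every $f \in C^\infty(M)$ with $\int_M f\,\om^n = 0$ can be written as a finite sum $\sum_i \{g_i, h_i\}$ of Poisson brackets. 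The vanishing integral is a necessary condition, since $\{g,h\}\,\om^n = L_{X_g}(h\,\om^n) = d\,i_{X_g}(h\,\om^n)$ is exact.

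The key computational input is the identity $L_{hX_g}\om^n = \{g,h\}\,\om^n$, coming from $L_{X_g}\om^n = 0$ combined with $dh \wedge \om^n = 0$; it recasts the assertion $f = \sum_i \{g_i, h_i\}$ as the equation $f\om^n = d\sum_i h_i\, i_{X_{g_i}}\om^n$. I first prove the reduction locally in a Darboux ball $B$ with coordinates $(q_j, p_j)$: if $f \in C^\infty_c(B)$ has zero integral, then $f\om^n$ is an exact compactly supported top form, so $f\om^n = d\,i_Z\om^n$ for some compactly supported vector field $Z$ on $B$. Expanding $Z$ in the Darboux basis and using a cutoff $\chi \equiv 1$ on $\mathrm{supp}(Z)$ to replace the non-compactly-supported coordinate Hamiltonians $p_j, q_j$ by $\chi p_j, \chi q_j$, one rewrites $Z = \sum_k h_k X_{g_k}$ with $g_k, h_k \in C^\infty_c(B)$, hence $f = \sum_k \{g_k, h_k\}$ on $B$.

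The main technical obstacle is then the globalization. I take a finite partition of unity $\{\rho_\al\}$ subordinate to Darboux balls and decompose $f = \sum_\al \rho_\al f$. Each $\rho_\al f$ has compact support in a Darboux ball but generally nonzero integral $c_\al := \int_M \rho_\al f\,\om^n$; only the total $\sum_\al c_\al$ vanishes. Since $M$ is connected, any two Darboux balls can be joined by a chain of pairwise overlapping ones, and by inserting bumps of matching integrals in successive overlaps (so that each link of the chain becomes a difference of two bumps with common integral, supported in a common Darboux chart, hence a Poisson bracket by the local result), one transports the masses $c_\al$ to a single fixed reference ball, where $\sum_\al c_\al = 0$ lets the accumulated bump itself be absorbed as a Poisson bracket. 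After this correction every remaining summand has compact support in a single Darboux ball with zero integral, so the local result applies term by term and the proof concludes.
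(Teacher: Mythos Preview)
The paper does not give its own proof of this proposition; it simply attributes the result to Calabi \cite{Calabi70} and moves on. So there is nothing to compare against directly.

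Your argument is correct and follows the classical line of proof for this result. The reduction to showing that every zero-mean function is a finite sum of Poisson brackets is the right one (the kernel of $f\mapsto X_f$ being the constants on the connected $M$). The local step in a Darboux ball is sound: a compactly supported top form with vanishing integral is $d\beta$ with $\beta$ compactly supported, $\beta = i_Z\omega^n$ for a unique compactly supported $Z$, and the identity $L_{hX_g}\omega^n = \{g,h\}\,\omega^n$ converts the coordinate expansion of $Z$ into a sum of Poisson brackets; the cutoff trick works because $h_k X_{\chi g_k} = h_k X_{g_k}$ when $\chi\equiv 1$ on $\mathrm{supp}(h_k)$. The globalization by partition of unity plus mass transport along chains of overlapping Darboux balls is standard and correctly carried out: each telescoping difference $\phi_{\alpha,i-1}-\phi_{\alpha,i}$ is supported in the single ball $B_{\alpha,i}$ with zero integral, and the leftover bumps all land in the reference ball $B_0$ with total integral $\sum_\alpha c_\alpha = 0$. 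Compactness of $M$ guarantees finiteness of the cover and of the chains, so the resulting sum of Poisson brackets is finite.
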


We equip the space $C^{\infty}(M)$ of $\R$-valued, smooth  functions on $M$ 
with the \emph{Poisson bracket}
$\{f,g\} = \omega(X_{f},X_{g})$.
The kernel of the Lie algebra homomorphism $f \mapsto X_{f}$, the space 
 of constant functions, is central in $C^{\infty}(M)$.
We thus  obtain a central extension
\begin{equation}\label{back}
0 \rightarrow \R \rightarrow C^{\infty}(M) \rightarrow \X_{\ham}(M,\om) \rightarrow 0\,,
\end{equation}
which is split for compact $M$.

\subsection{Divergence free vector fields}
Let $(P,\mu)$ be a compact, 
connected manifold of dimension $m$, equipped with a 
volume form $\mu$.
Then the Lie algebra of \emph{divergence free vector fields} is denoted by
\[\X(P,\mu) := \{X \in \X(P) \,;\, L_{X}\mu = 0\}\,.\]
The \emph{exact divergence free vector field} of
$\al \in \Om^{m-2}(P)$ is
the unique vector field $X_{\al}$ such that $d\al = i_{X_{\al}} \mu$.
Since the Lie bracket of $X,Y \in \X(P,\mu)$
is exact divergence free with $i_{[X,Y]}\mu = di_Xi_Y\mu$,
the Lie algebra of exact divergence free vector fields
\[
\X_{\ex}(P,\mu) := \{X_{\al}\,;\,\al \in \Om^{m-2}(P)\}
\]
is an ideal in $\X(P,\mu)$.
The map $X \mapsto [i_{X}\mu]$ identifies the
quotient of $\X(P,\mu)$ by $\X_{\ex}(P,\mu)$ with the abelian Lie algebra $H_{\mathrm{dR}}^{m-1}(P)$,
\begin{equation}
0 \rightarrow \X_{\ex}(P,\mu) \rightarrow \X(P,\mu) \rightarrow H_{\mathrm{dR}}^{m-1}(P) \rightarrow 0\,.
\end{equation}

\begin{Proposition}\label{volp} {\rm (\cite{Li74})} 
The Lie algebra $\X_{\ex}(P,\mu)$ of exact divergence free vector fields is perfect
and coincides with the commutator Lie algebra of the Lie algebra
$\X(P,\mu)$ of divergence free vector fields. 
\end{Proposition}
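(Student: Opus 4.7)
The plan is to deduce both assertions from the single statement that $\X_{\ex}(P,\mu)$ is perfect. Indeed, the short exact sequence displayed just above the proposition shows that $\X(P,\mu)/\X_{\ex}(P,\mu) \cong H_{\dR}^{m-1}(P)$ is abelian, hence $[\X(P,\mu),\X(P,\mu)] \subseteq \X_{\ex}(P,\mu)$. If in addition $\X_{\ex}(P,\mu) = [\X_{\ex}(P,\mu),\X_{\ex}(P,\mu)]$, then the reverse inclusion $\X_{\ex}(P,\mu)\subseteq [\X(P,\mu),\X(P,\mu)]$ is automatic, proving both that the ideal is perfect and that it coincides with the full commutator.

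Next I would exploit the bracket formula $i_{[X,Y]}\mu = d\, i_X i_Y\mu$ recalled above: for $\al,\be\in\Om^{m-2}(P)$ one gets
\[
 [X_\al, X_\be] \;=\; X_{\ga(\al,\be)}, \qquad \ga(\al,\be) := i_{X_\al} i_{X_\be}\mu\ \in\ \Om^{m-2}(P).
\]
Since $X_\al$ depends only on $d\al$, perfectness is equivalent to the claim that every exact $(m-1)$-form on $P$ can be written as $\sum_i d\,\ga(\al_i,\be_i)$ for finitely many $\al_i,\be_i\in\Om^{m-2}(P)$. This converts a statement about vector fields into a statement about forms.

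The actual work is local. I would choose a finite cover of $P$ by charts on which $\mu$ takes the Euclidean form $dx^1\wedge\cdots\wedge dx^m$, take a subordinate partition of unity, and use it to reduce to the model problem on $\R^m$: show that every compactly supported exact divergence-free vector field $X_\al$ on $\R^m$ is a finite sum of commutators $[X_{\al_i}, X_{\be_i}]$ with $\al_i,\be_i$ of compact support. Having a basis of generators $f(x)\,\partial_{x_j}$ (with $\partial_{x_j} f = 0$ where required by the divergence-free condition), one produces explicit Ans\"atze of the form $\al_i = g_i(x)\,dx^{I_i}$, $\be_i = x_k\,dx^{J_i}$, with the indices chosen so that $\ga(\al_i,\be_i)$ contributes the desired coefficient; the support of the $\be_i$ can be cut off to stay inside the chart without changing $d\,\ga(\al_i,\be_i)$ on the support of $\al$. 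Reassembling via the partition of unity then expresses $X_\al$ globally as a sum of commutators of elements of $\X_{\ex}(P,\mu)$.

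The main obstacle is exactly this flat-space construction. Unlike the symplectic case (Calabi), where the identity $\{x_i, f\} = \partial_{p_i} f$ gives an almost trivial local generation, here $(m-2)$-forms have $\binom{m}{2}$ components coupled by the exterior derivative, and one must organise the explicit choices so that the cross-terms cancel and only the prescribed component survives. Once these local formulas are in hand, the partition-of-unity argument and the reduction in the first paragraph are routine, and the proposition follows.
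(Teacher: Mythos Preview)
The paper does not supply a proof of this proposition: it is stated with a bare citation to Lichnerowicz \cite{Li74}, as is standard in a survey. So there is no in-text argument to compare your proposal against, and the relevant question is whether your sketch is internally sound.

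Your global reductions are correct. Deducing both assertions from perfectness of $\X_{\ex}(P,\mu)$ via the abelian quotient $H^{m-1}_{\dR}(P)$ is fine; localising by applying a partition of unity to the \emph{potential} $\al$ (so that each $\rho_i\al$ is compactly supported in a chart and $X_{\rho_i\al}\in\X_{\ex}$) is the right manoeuvre; and the cut-off of the non-compact factor $\be_i=x_k\,dx^{J_i}$ is justified exactly as you say, since $[X_{\al_i},X_{\be_i}]$ is supported in $\mathrm{supp}\,X_{\al_i}$.

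One genuine imprecision: your ``basis of generators $f(x)\,\partial_{x_j}$ with $\partial_{x_j}f=0$'' does not exist in the compactly supported setting (such an $f$ is identically zero), and even without support constraints a general divergence-free field $X=\sum X_i\partial_i$ does not decompose into divergence-free single-component pieces. The correct elementary building blocks come from monomial potentials $\al=h\,dx^{I}$ and are the two-index fields $X_h^{(ij)}=(\partial_j h)\partial_i-(\partial_i h)\partial_j$. With your Ansatz $\be=x_k\,dx^{J}$ one has $X_\be=\pm\partial_l$ for the complementary index $l$, and then $[X_h^{(ij)},X_\be]=\mp X_{\partial_l h}^{(ij)}$; so your commutators only hit fields $X_H^{(ij)}$ with $H$ a sum of partial derivatives of compactly supported functions, i.e.\ with $\int_{\R^m}H=0$. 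Closing the remaining one-dimensional gap (the part of $H$ with nonzero total integral) is exactly the ``main obstacle'' you flag, and your sketch does not yet resolve it. For a complete local argument see \cite{Li74} or the treatment in \cite{B}.
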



\section{Lie algebra cocycles from closed forms}
For the symplectic vector fields as well as for the
divergence free vector fields, one obtains Lie algebra 2-cocycles
from closed forms and also from singular cycles on the manifold.

\subsection{Divergence free vector fields}
Each closed 2-form $\et$ on $P$ 
defines a continuous 2-cocycle on the Lie algebra of divergence free vector fields,
called a {\it Lichnerowicz cocycle}, by (cf.\ \cite{Li74})
\begin{equation}\label{lich}
\la_\et(X,Y)=\int_P\et(X,Y)\mu\,.
\end{equation}
Cohomologous forms define cohomologous cocycles,
so we get a linear map
\begin{equation}\label{iso1}
[\et]\in H^2_{\dR}(P)\mapsto [\la_\et]\in H^2(\X(P,\mu),\R)
\end{equation}
of $H^2_{\dR}(P)$ into the second continuous Lie algebra cohomology 
of $\X(P,\mu)$.
In \cite[\S 10]{R95}, Roger outlines a proof of the following statement
(we were not able to find a detailed proof in the literature):
\begin{Theorem}\label{CEvolume}
The map \eqref{iso1} is an isomorphism between 
$H_{\dR}^2(P)$ and the continuous second Lie algebra cohomology 
$H^2(\X_{\ex}(P,\mu),\R)$.
\end{Theorem}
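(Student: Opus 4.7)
The plan is to prove Theorem~\ref{CEvolume} in three steps: well-definedness of the map \eqref{iso1}, then its injectivity, then its surjectivity.

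\textbf{Well-definedness.} First I would check that if $\et = d\be$ for some $\be \in \Om^1(P)$, then $\la_\et$ is a coboundary. Setting $b_\be(X) := \int_P \be(X)\mu$, the Cartan formula $d\be(X,Y) = X\be(Y) - Y\be(X) - \be([X,Y])$ combined with the identity $\int_P X\be(Y)\mu = \int_P L_X(\be(Y)\mu) = 0$ (using $L_X\mu = 0$ and Stokes on the compact $P$) gives $\la_{d\be}(X,Y) = -b_\be([X,Y]) = (\de b_\be)(X,Y)$.

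\textbf{Injectivity.} Suppose $\la_\et = \de b$ for a continuous 1-cochain $b$ on $\X_{\ex}(P,\mu)$. Pulling $b$ back along the surjection $\al \mapsto X_\al$ from $\Om^{m-2}(P)$, continuity on the Fr\'echet space of smooth forms combined with the vanishing on closed $(m-2)$-forms yields, via a representation argument, a (possibly distributional) 1-form $\th$ with $b(X) = \int_P \th(X)\mu$. The equation $\la_\et = \de b$ then reads $\int_P (\et - d\th)(X,Y)\mu = 0$ for all $X,Y \in \X_{\ex}(P,\mu)$. Since locally supported exact divergence free vector fields realize arbitrary prescribed values at any point of $P$, this forces $\et = d\th$ as a current, and equality of distributional and smooth de Rham cohomology yields $[\et] = 0$ in $H^2_{\dR}(P)$.

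\textbf{Surjectivity.} Given a continuous 2-cocycle $\la$, I would first use continuity in the $C^\oo$-Fr\'echet topology to show that $\la(X,Y) = 0$ when $X$ and $Y$ have disjoint supports, so that in every chart $\la$ is represented by a bidifferential operator. The cocycle identity $\de\la = 0$ then strongly constrains this operator: modulo an explicit coboundary one can reduce it to zeroth order in each argument, producing a skew bilinear form $\et$ on tangent vectors that glues to a global 2-form on $P$. The cocycle identity again forces $d\et = 0$, and the 1-cochain $c$ assembled from the higher-order terms eliminated during the reduction witnesses $\la - \la_\et = \de c$.

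\textbf{Main obstacle.} The delicate step is the reduction to zeroth order in the surjectivity argument. This is the Gelfand-Fuks-style core of Roger's outline: one has to track antisymmetry, the order of differentiation and the effect of the Jacobi identity, and verify that the local integration-by-parts corrections patch together into a globally defined continuous 1-cochain on $\X_{\ex}(P,\mu)$ rather than merely local ones. Auxiliary care is also needed at the distributional step in the injectivity argument, but the surjectivity reduction is where the real combinatorial bookkeeping lies.
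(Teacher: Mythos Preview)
The paper does not actually prove Theorem~\ref{CEvolume}. It states the result with attribution to Roger's outline in \cite{R95} and explicitly remarks that the authors ``were not able to find a detailed proof in the literature.'' There is therefore no proof in the paper against which to compare your proposal.

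That said, your sketch is very much in the spirit of Roger's outline and of the analogous argument the paper \emph{does} carry out for the Hamiltonian case (the proof sketch following Theorem~3.4): diagonality of the cocycle, Peetre's theorem to get a bidifferential operator, reduction of order via the cocycle identity, and identification of the resulting tensor as a closed form. So as a plan it is entirely reasonable.

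One substantive caveat on your injectivity step. You claim that $\int_P\sigma(X,Y)\,\mu=0$ for all $X,Y\in\X_{\ex}(P,\mu)$ forces $\sigma=0$ pointwise. This is false in dimension~$2$: there $\sigma=\mu$ gives $\lambda_\mu(X_f,X_g)=\int_P\{f,g\}\,\mu=\int_P df\wedge dg=0$ identically, yet $[\mu]\neq 0$ in $H^2_{\dR}(P)$. (Indeed, for a surface the symplectic description gives $H^2(\X_{\ham}(P,\mu),\R)\simeq H^1_{\dR}(P)$, not $H^2_{\dR}(P)$.) For $m\ge 3$ the conclusion is correct, but the reason is not merely that exact divergence free fields realize arbitrary values at a point; rather one needs that the $(m-2)$-forms $i_Xi_Y\mu$ with $X,Y\in\X_{\ex}(P,\mu)$ span a dense subspace of $\Omega^{m-2}(P)$, so that $\int_P\sigma\wedge i_Xi_Y\mu=0$ forces $\sigma=0$. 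This density holds precisely when there is a third coordinate direction available, which is where the hypothesis $m\ge 3$ enters. You should make that hypothesis explicit and supply the density argument.
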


If $m$ is the dimension of $P$, then each 
singular $(m-2)$-cycle $C$ 
defines a continuous Lie algebra 2-cocycle on $\X(P,\mu)$ by
\begin{equation}\label{lichsing}
\la_C(X,Y)=\int_Ci_Yi_X\mu\,.
\end{equation}
Again, homologous $(m-2)$-cycles yield cohomologous cocycles, so we obtain a linear map
\begin{equation}
[C] \in H_{m-2}(P,\R) \mapsto [\lambda_{C}] \in H^2(\X(P,\mu),\R)\,.
\end{equation}  

\begin{Proposition}{\rm (\cite{Vi10})}
If the cohomology classes $[\et]\in H^2_{\dR}(P)$ and $[C]\in H_{m-2}(P,\R)$ 
are Poincar\'e dual to each other,
then the cocycles $\la_\et$ and $\la_C$ lie in the same Lie algebra cohomology class in 
$H^2(\X(P,\mu),\R)$.
\end{Proposition}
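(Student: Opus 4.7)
The plan is to rewrite both cocycles so that Poincar\'e duality can be applied directly. For $\la_\et$, observe that $\et\wedge i_X\mu$ is an $(m+1)$-form on $P$ and therefore vanishes; applying $i_Y$ and using the elementary identity $\gamma\wedge i_X\mu = \gamma(X)\mu$ for any 1-form $\gamma$ (which follows from $i_X(\gamma\wedge\mu)=0$), one obtains the pointwise identity
\[
\et(X,Y)\,\mu \;=\; \et\wedge i_Y i_X\mu,
\]
so that $\la_\et(X,Y)=\int_P \et\wedge i_Y i_X\mu$. Both $\la_\et$ and $\la_C$ now appear as pairings of the single $(m-2)$-form $i_Y i_X\mu$ with a geometric object on $P$: the smooth current $\et\wedge(\,\cdot\,)$ on the one hand, and the singular cycle $C$ on the other.

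Next, I would invoke Poincar\'e duality at the level of de Rham currents. Saying that $[\et]\in H^2_{\dR}(P)$ is Poincar\'e dual to $[C]\in H_{m-2}(P,\R)$ means precisely that the two degree-$2$ currents $\al\mapsto\int_P\et\wedge\al$ and $\al\mapsto\int_C\al$ on $\Om^{m-2}(P)$ are cohomologous in the de Rham complex of currents. Consequently, there exists a current $T$ of dimension $m-1$ on $P$ with
\[
\int_C\al \;-\; \int_P\et\wedge\al \;=\; T(d\al)\qquad \text{for every }\al\in\Om^{m-2}(P).
\]

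The final step is to specialize to $\al = i_Y i_X\mu$. Since $L_X\mu=0$, Cartan's formula gives
\[
d(i_Y i_X\mu) \;=\; L_Y(i_X\mu) - i_Y d(i_X\mu) \;=\; i_{[Y,X]}\mu \;=\; -i_{[X,Y]}\mu,
\]
so that
\[
\la_C(X,Y) - \la_\et(X,Y) \;=\; -T\bigl(i_{[X,Y]}\mu\bigr).
\]
Defining the 1-cochain $\beta:\X(P,\mu)\to\R$ by $\beta(Z):=T(i_Z\mu)$, the right-hand side is precisely the Lie algebra coboundary $(d\beta)(X,Y)=-\beta([X,Y])$. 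Hence $\la_C$ and $\la_\et$ differ by a coboundary and represent the same class in $H^2(\X(P,\mu),\R)$.

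The main subtlety I anticipate is the current-theoretic step itself: given a closed 2-form $\et$ representing the Poincar\'e dual of $C$, one must produce an explicit $(m-1)$-current $T$ that witnesses the cohomology between the currents $\et\wedge(\,\cdot\,)$ and $C$. This is standard from de Rham's theory of currents on smooth manifolds. Continuity of the resulting 1-cochain $\beta$ on $\X(P,\mu)$ is then automatic, since currents are by definition continuous functionals on forms and $Z\mapsto i_Z\mu$ is a continuous linear map $\X(P)\to\Om^{m-1}(P)$.
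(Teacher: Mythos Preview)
Your argument is correct. The pointwise identity $\et(X,Y)\,\mu=\et\wedge i_Yi_X\mu$ is verified exactly as you indicate, the de Rham current formulation of Poincar\'e duality supplies the $(m-1)$-current $T$, and the computation $d(i_Yi_X\mu)=-i_{[X,Y]}\mu$ for divergence free $X,Y$ then exhibits $\la_C-\la_\et$ as the Lie algebra coboundary of the continuous $1$-cochain $\beta(Z)=T(i_Z\mu)$.

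Note that the present paper does not actually supply a proof of this proposition; it merely cites \cite{Vi10}. Your approach---rewriting both cocycles as pairings with the $(m-2)$-form $i_Yi_X\mu$ and then invoking the de Rham theory of currents to produce the primitive $T$---is the natural one and is essentially what one finds in that reference. The only cosmetic difference one sometimes sees is that, when $C$ can be represented by a smooth submanifold, the current $T$ is taken to be integration over an explicit $(m-1)$-chain (or a smooth transgression form), which makes $\beta$ more concrete; your current-theoretic formulation is cleaner and covers the general singular case uniformly.
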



\subsection{Symplectic vector fields}
Each closed 1-form $\al$ on the compact symplectic manifold $(M,\om)$
of dimension $2n$ determines a continuous Lie algebra 2-cocycle
on the Lie algebra of Hamiltonian vector fields,
the {\it Roger cocycle}, given by \cite{Ki90,R95}
\begin{equation}\label{rog}
\psi_\al(X_f,X_g):=\int_M f\al(X_g)\om^n/n!\,.
\end{equation}
This expression does not depend on the choice of the Hamiltonian function $f$.
More generally, since $C^{\infty}(M)$ is a 1-dimensional central extension of 
the perfect Lie algebra $\X_{\ham}(M,\om)$, 2-cocycles on $\X_{\ham}(M,\om)$ correspond with 2-cocycles on 
$C^{\infty}(M)$, and we will not distinguish between the two.

Cohomologous 1-forms define cohomologous 2-cocycles,
so we get a linear map
\begin{equation}\label{iso2}
[\al]\in H^1_{\dR}(M)\mapsto [\ps_\al]\in H^2(\X_{\ham}(M,\om),\R).
\end{equation}
The following statement appears first in \cite{R95}, and is proven in
 \cite[Thm.~4.17]{JV15}. In the next section we sketch an outline of this proof.
\begin{Theorem}
The map \eqref{iso2} is an isomorphism between 
$H_{\dR}^1(M)$ and the continuous second Lie algebra cohomology 
$H^2(\X_{\ham}(M,\om),\R)$.
\end{Theorem}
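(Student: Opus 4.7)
The first move is to pass to the central extension \eqref{back} and work with the Poisson algebra $C^{\infty}(M)$ instead of $\X_{\ham}(M,\om)$: as noted after \eqref{rog}, continuous $2$-cocycles correspond, and the Roger cocycle $\psi_\al(f,g)=\int_M f\,\al(X_g)\,\om^n/n!$ is naturally defined on $C^{\infty}(M)$. It then suffices to classify continuous 2-cocycles on $C^{\infty}(M)$ up to coboundary and constant terms.

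For injectivity of $[\al]\mapsto [\psi_\al]$, suppose $\psi_\al=\partial\be$ for some continuous linear functional $\be$ on $C^{\infty}(M)$. I would evaluate both sides on pairs $(f,g)$ where $f$ is bump-like with small support and $X_g$ translates this support along a closed curve $\ga$, keeping the shape of $f$ essentially constant. Up to boundary contributions, which are controlled by $\be$, the value of $\psi_\al(f,g)$ is then proportional to the period $\oint_\ga \al$; if $\psi_\al$ is a coboundary, all such periods vanish and $\al$ is exact.

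Surjectivity is the substantive content. Given a continuous 2-cocycle $\om$ on $C^{\infty}(M)$, the Schwartz kernel theorem represents it by an antisymmetric distribution $K_\om\in\mathcal D'(M\x M)$, and the cocycle identity becomes a distributional equation for $K_\om$. My plan proceeds in three steps. First, \emph{localize to the diagonal}: using that Hamiltonian diffeomorphisms act transitively on configurations of distinct points, and that the coboundary equation for an off-diagonal piece can be solved locally with prescribed support (exploiting perfectness from Proposition \ref{hamp}), one shows that $\om$ is cohomologous to a cocycle whose kernel is supported on $\De\subset M\x M$. Second, \emph{analyze the local model}: on a Darboux chart, such a diagonal cocycle is given by a bidifferential operator, and equivariance under the transitive pseudogroup of local Hamiltonian diffeomorphisms reduces the classification to continuous Chevalley--Eilenberg cohomology of the formal Hamiltonian Lie algebra $\mathfrak{ham}_{2n}$; the relevant Gelfand--Fuks-type computation yields that, modulo coboundaries, the local cocycle is parametrized by a 1-form on the chart. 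Third, \emph{globalize}: the local 1-forms patch into a globally defined closed 1-form $\al\in\Om^1(M)$, well-defined up to exact forms, and $\om-\psi_\al$ is a coboundary.

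The hardest step will be the localization to the diagonal. Controlling supports while subtracting a coboundary, without introducing new obstructions, requires exploiting both perfectness of $\X_{\ham}(M,\om)$ and the transitivity of the Hamiltonian action to solve local extension problems compatibly on overlaps. Once $K_\om$ is reduced to the diagonal, the Gelfand--Fuks identification with a closed 1-form and the subsequent globalization are comparatively formal.
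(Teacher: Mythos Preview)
Your overall strategy is viable and shares its starting point with the paper (pass to $C^{\infty}(M)$ and analyze continuous $2$-cocycles), but from there the two routes diverge substantially.

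The chief difference is in your ``hardest step''. You propose to force the kernel onto the diagonal by subtracting a coboundary, solving local extension problems compatibly across a cover. In the paper's argument this step disappears: one shows that \emph{every} continuous cocycle is already diagonal, with no correction needed. The reason is perfectness combined with the cocycle identity: if $f$ and $g$ have disjoint supports, write $g$ locally as a finite sum of Poisson brackets $\{g_i,g_i'\}$ supported away from $f$; then $\psi(f,\{g_i,g_i'\}) = -\psi(g_i,\{g_i',f\}) - \psi(g_i',\{f,g_i\}) = 0$ since all the brackets with $f$ vanish. So the delicate off-diagonal cohomological surgery you anticipate is unnecessary.

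Once the cocycle is diagonal, the paper also proceeds differently. Rather than invoke the Gelfand--Fuks cohomology of $\mathfrak{ham}_{2n}$, it observes that $D(f)g := \psi(f,g)$ is a support-decreasing antisymmetric derivation, hence by Peetre's theorem a bidifferential operator; a direct computation in Darboux coordinates using the derivation property forces the order to be one, so $\psi(f,g) = S(f\,dg)$ for a current $S \in \Omega^1(M)'$. Closedness of $\psi$ then says exactly that $S$ kills $\delta\Omega^2(M)$, and the universal central extension \eqref{unihambis} drops out. The identification of $H^2$ with $H^1_{\dR}(M)$ is obtained at the end via Poincar\'e duality on the center $H_1^{\can}(M,\omega^\sharp)$, rather than by producing a $1$-form directly.

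Your Gelfand--Fuks route would also reach the goal, and has the virtue of situating the result in a broader formal-geometry framework; the paper's route is more elementary and yields the universal extension as a byproduct. Your separate injectivity argument via periods is plausible but somewhat heuristic as stated; in the paper injectivity is not argued separately, it follows from identifying $H^2$ with the dual of the center of the universal extension.
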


Each singular $(2n-1)$-cycle $C\in Z_{2n-1}(M)$ defines a continuous 2-cocycle on $\X_{\ham}(M,\om)$ by
\begin{equation}\label{rogsing}
\psi_C(X_f,X_g):= \int_C fdg \wedge\omega^{n-1}/(n-1)!\,.
\end{equation}

\begin{Proposition}{\rm (\cite{JV15})}\label{PDforsymp}
If the cohomology class $[\al]\in H_{\dR}^1(M)$ is Poincar\'e dual to $[C]\in H_{2n-1}(M,\R)$,
then the cocycle $\ps_C$ and the Roger cocycle
$\ps_\al$ are cohomologous cocycles on $\X_{\ham}(M,\om)$.
\end{Proposition}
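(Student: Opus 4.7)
The plan is to exhibit $\ps_C-\ps_\al$ as an explicit Chevalley--Eilenberg coboundary, by first re-expressing both cocycles as integrals of a common $(2n-1)$-form and then invoking Poincar\'e duality at the level of de~Rham currents. Introduce the auxiliary form
\[
\ga_{f,g} := f\,dg\wedge\om^{n-1}/(n-1)! \in \Om^{2n-1}(M).
\]
Directly from the definition, $\ps_C(X_f,X_g) = \int_C\ga_{f,g}$. Using $i_{X_g}\om = -dg$ and the vanishing of $\al\wedge\om^n$ for dimensional reasons, a short computation gives
\[
\ps_\al(X_f,X_g) = \int_M f\,\al(X_g)\,\om^n/n! = \int_M \ga_{f,g}\wedge\al\,.
\]
Interpreting Poincar\'e duality as the relation $\int_C\be = \int_M\be\wedge\al$ for closed $(2n-1)$-forms $\be$, these two integrals coincide whenever $\ga_{f,g}$ is closed, so the main task is to quantify the failure of closedness of $\ga_{f,g}$ in terms of the Poisson bracket. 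Contracting $dg\wedge\om^n = 0$ with $X_f$ gives the key identity
\[
d\ga_{f,g} = df\wedge dg\wedge\om^{n-1}/(n-1)! = \{f,g\}\,\om^n/n!\,.
\]

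Next I would invoke Poincar\'e duality at the level of currents: since $[\al]\in H^1_{\dR}(M)$ is dual to $[C]\in H_{2n-1}(M,\R)$, the currents represented by $\al$ and by integration over $C$ agree modulo a coboundary in the complex of de~Rham currents, so there exists a $0$-current (i.e.\ a distribution) $u$ on $M$, unique up to an additive constant, such that
\[
\int_C\be-\int_M\be\wedge\al = -\langle u, d\be\rangle
\]
holds for every $\be\in\Om^{2n-1}(M)$, where $\langle u,\cdot\rangle$ denotes the distributional pairing with a top-form. Normalising $u$ by $\langle u,\om^n/n!\rangle = 0$ and specialising to $\be = \ga_{f,g}$ yields
\[
\ps_C(X_f,X_g) - \ps_\al(X_f,X_g) = -\langle u, d\ga_{f,g}\rangle = -\langle u,\{f,g\}\,\om^n/n!\rangle\,.
\]

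The proof is completed by introducing the 1-cochain $b(X_f) := \langle u, f\,\om^n/n!\rangle$ on $\X_{\ham}(M,\om)$, which is well defined (i.e.\ does not depend on the choice of Hamiltonian $f$ for $X_f$) thanks to the normalisation of $u$. Its Chevalley--Eilenberg coboundary satisfies
\[
(db)(X_f,X_g) = -b([X_f,X_g]) = -b(X_{\{f,g\}}) = -\langle u,\{f,g\}\,\om^n/n!\rangle = \ps_C(X_f,X_g)-\ps_\al(X_f,X_g)\,,
\]
so $\ps_C$ and $\ps_\al$ represent the same class in $H^2(\X_{\ham}(M,\om),\R)$.

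The principal technical obstacle is the regularity of the primitive $u$: when $C$ is, say, a smooth cooriented hypersurface PD to the closed $1$-form $\al$, the Dirac current $\de_C$ is singular and $u$ is only a distribution (typically locally constant off $C$ with a jump across $C$), so the pairings $\langle u,\cdot\rangle$ and the adjoint Stokes identity $\int_M du\wedge\be = -\langle u, d\be\rangle$ used above must be interpreted in the sense of currents. Related subtleties are the careful bookkeeping of signs, which hinges on the chosen PD convention and on the anticommutation between $\al$ and the odd-degree form $\ga_{f,g}$, and the verification that $b$ is continuous in the natural Fr\'echet topology on $C^\oo(M)$, so that the argument produces an equality in the \emph{continuous} Lie algebra cohomology $H^2(\X_{\ham}(M,\om),\R)$ considered in the paper.
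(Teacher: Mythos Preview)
Your argument is correct. Note, however, that the paper itself does not supply a proof of this proposition---it simply cites \cite{JV15}---so there is no in-text argument to compare against. Your route via de~Rham currents is the natural one and is essentially what one finds in the cited reference: rewrite both cocycles as pairings of the $(2n-1)$-form $\gamma_{f,g}$ against the closed $(2n-1)$-currents $\int_C(\,\cdot\,)$ and $\int_M(\,\cdot\,)\wedge\alpha$, observe that these currents are homologous by the Poincar\'e duality hypothesis, and use the primitive $u$ to produce the coboundary.

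The technical worries you raise are not genuine obstacles. A $2n$-current on the closed $2n$-manifold $M$ is by definition a continuous linear functional on $\Omega^{2n}(M)$, so $b(X_f)=\langle u,f\,\omega^n/n!\rangle$ is automatically continuous in $f$; no regularity of $u$ beyond this is needed. The ambiguity in $u$ is exactly the one-dimensional space of closed $2n$-currents (multiples of integration over $M$), which your normalisation $\langle u,\omega^n/n!\rangle=0$ fixes, and this is precisely what makes $b$ descend from $C^\infty(M)$ to $\X_{\ham}(M,\omega)$. One small expository point: in your ``key identity'', the first equality $d\gamma_{f,g}=df\wedge dg\wedge\omega^{n-1}/(n-1)!$ is just $d\omega=0$; it is only the second equality that comes from contracting $dg\wedge\omega^n=0$ with $X_f$.
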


In contrast to the Lichnerowicz cocycles $\la_\et$ that are defined directly on 
$\X(P,\mu)$, not every Roger cocycle $\ps_\al$ can be extended from $\X_{\ham}(M,\om)$
to $\X(M,\om)$: the obstruction depends on the cohomology class 
$[\al]\in H_{\dR}^1(M)$.
To write it down, we introduce a skew symmetric 2-form and an alternating 4-form on $H^1_{\mathrm{dR}}(M)$ defined by
\begin{gather*}
([\alpha], [\beta]) := \int_M \alpha \wedge \beta \wedge \omega^{n-1}/(n-1)!\\
([\al],[\beta_1],[\beta_2],[\beta_3])=\int_M \al \wedge\beta_1\wedge\beta_2\wedge\beta_3 \wedge\om^{n-2}/(n-2)!\,.
\end{gather*}

\begin{Theorem}{\rm (\cite{Vi05})}
The Roger cocycle $\ps_\al$ on  $\X_{\ham}(M,\om)$ is extendible to a cocycle
on $\X(M,\om)$ if and only if the class $[\al]\in H^1_{\mathrm{dR}}(M)$ 
satisfies
\[
([\al],b_1,b_2,b_3) = 
\frac{1}{\mathrm{vol}_\om(M)}
\sum_{\mathrm{cycl}}([\al],b_1)(b_2,b_3)
\]
for all $b_1, b_2, b_3\in H^1_{\mathrm{dR}}(M)$,
where the symplectic volume is $\mathrm{vol}_\om(M)=\int_M\om^n/n!$.
\end{Theorem}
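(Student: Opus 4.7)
The plan is to analyse the obstruction to extension by means of the Hochschild--Serre spectral sequence associated to the short exact sequence \eqref{sesham}. With $\g = \X(M,\om)$, $\h = \X_{\ham}(M,\om)$ and $\g/\h = H^1_{\dR}(M)$, this is
\[
E_2^{p,q}=H^p\bigl(H^1_{\dR}(M);H^q(\X_{\ham}(M,\om),\R)\bigr)\Rightarrow H^{p+q}(\X(M,\om),\R),
\]
and its edge homomorphism $H^2(\X(M,\om))\to H^2(\X_{\ham})$ is the restriction of cocycles. Extendibility of $[\psi_\al]$ to $\X(M,\om)$ is therefore equivalent to its survival under all outgoing differentials from $E_r^{0,2}$.

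Two preliminary observations collapse the spectral sequence considerably. First, since $H^1_{\dR}(M)$ is abelian and acts on a Roger cocycle by $Y\cdot\psi_\al = \psi_{L_Y\al}$ (a short integration-by-parts calculation) and $L_Y\al = d\,i_Y\al$ is exact for closed $\al$, the induced action on $H^2(\X_{\ham})$ is trivial, so $E_2^{p,q} = \Lambda^p(H^1_{\dR}(M))^* \otimes H^q(\X_{\ham},\R)$. Second, by Proposition \ref{hamp} the Lie algebra $\X_{\ham}$ is perfect, so $H^1(\X_{\ham},\R) = 0$; hence the entire row $E_2^{\bullet,1}$ vanishes and $d_2|_{E_2^{0,2}}=0$ automatically. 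The sole surviving obstruction is the transgression
\[
d_3\colon H^2(\X_{\ham},\R)\longrightarrow \Lambda^3(H^1_{\dR}(M))^*.
\]

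To compute $d_3[\psi_\al]$ explicitly, I would pick closed one-form representatives $\beta_i$ of a basis of $H^1_{\dR}(M)$ with symplectic lifts $Y_i\in\X(M,\om)$, $i_{Y_i}\om=\beta_i$, and assemble an extension $\tilde\psi$ step by step. The cocycle identity on triples $(X_f,X_g,Y_i)$ uniquely determines $\tilde\psi(X_f,Y_i)$, up to a central bilinear term, as a prescribed integral of $f$ against $\al\wedge\beta_i\wedge\om^{n-1}$. The identity on triples $(X_f,Y_i,Y_j)$ then forces the shape of $\tilde\psi(Y_i,Y_j)$ in terms of a Hamiltonian for $[Y_i,Y_j]=X_{\om(Y_i,Y_j)}$, determined only up to an additive constant whose natural normalisation is zero symplectic mean. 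Finally the cocycle identity on triples $(Y_i,Y_j,Y_k)$ yields a residual obstruction in $\Lambda^3(H^1_{\dR}(M))^*$ which is precisely $d_3[\psi_\al]$.

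The main obstacle is the bookkeeping in this last step: one must keep track of the constant ambiguities at each stage, use Stokes' theorem against $\om^n/n!$ together with $d\beta_i=0$, and identify the normalising constants with the symplectic pairing $(b_i,b_j)$. The factor $1/\vol_\om(M)$ in the theorem arises exactly from projecting the Hamiltonians $\om(Y_i,Y_j)$ to their zero-mean part. Once the transgression has been computed, its value on $(b_1,b_2,b_3)$ is seen to be $([\al],b_1,b_2,b_3) - \tfrac{1}{\vol_\om(M)}\sum_{\mathrm{cycl}}([\al],b_1)(b_2,b_3)$, and the theorem follows from $d_3[\psi_\al]=0$.
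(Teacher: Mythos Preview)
Your approach is correct and coincides with the paper's: the paper merely states that ``the proof uses the Hochschild--Serre spectral sequence associated to \eqref{sesham}'' and cites \cite{Vi05}, and your outline---trivial action of $H^1_{\dR}(M)$ on $H^2(\X_{\ham},\R)$, vanishing of the row $E_2^{\bullet,1}$ by perfectness, and identification of the sole obstruction with the transgression $d_3$ into $\Lambda^3(H^1_{\dR}(M))^*$---is exactly this argument. The explicit step-by-step extension you describe is the standard way to compute $d_3[\psi_\al]$, and your explanation of the $1/\vol_\om(M)$ factor via the zero-mean normalisation of the Hamiltonians $\om(Y_i,Y_j)$ is the right mechanism.
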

The proof uses the Hochschild-Serre spectral sequence associated to \eqref{sesham}.
For the $2n$-torus and for Thurston's symplectic manifold, all Roger cocycles extend to the Lie algebra of symplectic vector fields, while for a surface of genus $g\ge 2$ no Roger cocycle can be extended.


\section{Universal central extension}

A continuous, linearly split central extension
$\mathfrak{z} \rightarrow 
\widehat{\g} \rightarrow \g$
of the locally convex Lie algebra $\g$ is called \emph{universal} 
if for every continuous, linearly split central extension
$\mathfrak{a} \rightarrow \g^{\sharp}\rightarrow \g$, there exists a unique continuous Lie algebra homomorphism 
$\phi \colon \widehat{\g} \rightarrow \g^{\sharp}$ such that
the following diagram is commutative:
\begin{center}
\mbox{
\xymatrix
{
\mathfrak{z} \ar[r]&  
\widehat{\g}\ar[r]^q&
\,\g\,
\\
\mathfrak{a} \ar[r]\ar@{<-}[u]_{\phi|_{\mathfrak{z}}}&  
\g^{\sharp}\ar[r]\ar@{<-}[u]_{\phi} &
\,\g.
\ar@{<-}[u]_{\mathrm{Id}}
}
}
\end{center}
If a universal central extension exists, then it is 
unique up to continuous isomorphisms. 
If $\mathfrak{\z}$ is finite dimensional, then the continuous second Lie algebra cohomology satisfies
$H^2(\g,\R) \simeq \mathfrak{z}^*$.

\subsection{Divergence free vector fields.}

The Lie algebra $\X_{\ex}(P,\mu)$ of exact divergence free vector fields admits  
a central extension
\[
0 \rightarrow Z^{m-2}(P) \rightarrow \Om^{m-2}(P) \stackrel{q}{\rightarrow} \X_{\ex}(P,\mu) \rightarrow 0\,
\]
on the level of \emph{Leibniz algebras}
(this observation is due to Brylinski, cf.\ \cite{R95}).  
The projection $q$ is given by $q(\alpha) = X_{\alpha}$, and
the (left) Leibniz bracket on $\Om^{m-2}(P)$ is given by
\begin{equation}\label{leib}
[\al,\be]=L_{X_\al}\be=i_{X_\al}i_{X_\be}\mu+di_{X_\al}\be.
\end{equation}
Indeed, the left Leibniz identity $[\al,[\be,\ga]]=[[\al,\be],\ga]+[\be,[\al,\ga]]$
follows from $[X_\al,X_\be]=X_{[\al,\be]}$.
Since the Leibniz bracket is skew-symmetric modulo exact terms, 
the Fr\'echet space $\Om^{m-2}(P)/d\Om^{m-3}(P)$
inherits the Lie bracket
\[
[[\al],[\be]]=[i_{X_\al}i_{X_\be}\mu]\,.
\]
The map $q:[\al]\mapsto X_\al$ then becomes
a Lie algebra homomorphism, making 
\begin{equation}\label{univol}
 H^{m-2}_{\mathrm{dR}}(P) \to \Om^{m-2}(P)/d\Om^{m-3}(P)\stackrel{q}{\to} \X_{\ex}(P,\mu) 
\end{equation} 
a central extension of Lie algebras,
called the {\it Lichnerowicz central extension}.
A consequence of Theorem \ref{CEvolume} is:

\begin{Theorem}
The universal central extension of $\X_{\ex}(P,\mu)$ is
the Lichnerowicz central extension \eqref{univol}.
\end{Theorem}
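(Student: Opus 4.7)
The approach is to apply the standard criterion that a continuous, linearly split central extension $\mathfrak{z} \hookrightarrow \widehat{\g} \twoheadrightarrow \g$ of a perfect locally convex Lie algebra is universal precisely when the characteristic-class map
\[
L(\mathfrak{z},V) \to H^{2}(\g,V),\qquad \varphi \mapsto [\varphi \circ \omega],
\]
is a bijection for every locally convex target $V$, where $\omega$ is the $\mathfrak{z}$-valued 2-cocycle classifying the extension. Since $\g = \X_{\ex}(P,\mu)$ is perfect by Proposition \ref{volp}, and since $\mathfrak{z} = H^{m-2}_{\dR}(P)$ is finite dimensional (so bijectivity at $V = \R$ propagates to arbitrary $V$), it suffices to treat the scalar case.

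For $V = \R$, Poincar\'e duality on the compact oriented manifold $P$ identifies $H^{m-2}_{\dR}(P)^{*}$ with $H^{2}_{\dR}(P)$ via the non-degenerate pairing $\langle[\et],[\be]\rangle = \int_{P}\et\wedge\be$, while Theorem \ref{CEvolume} identifies $H^{2}(\X_{\ex}(P,\mu),\R)$ with $H^{2}_{\dR}(P)$ via $[\et]\mapsto[\la_{\et}]$. Under these two identifications the characteristic-class map becomes a linear endomorphism of $H^{2}_{\dR}(P)$, and the task reduces to showing it is the identity. Fix a continuous linear section $s$ of \eqref{univol}; the cocycle $\omega(X,Y) = [s(X),s(Y)] - s([X,Y])$ takes values in $H^{m-2}_{\dR}(P)$, and \eqref{leib} gives $[s(X_{\al}),s(X_{\be})] \equiv [i_{X_{\al}}i_{X_{\be}}\mu]$ modulo exact $(m-2)$-forms. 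Using the pointwise identity $\et\wedge i_{X}i_{Y}\mu = \et(X,Y)\,\mu$, obtained by twice contracting the vanishing top-degree form $\et\wedge\mu$, the scalar cocycle $\langle[\et],\omega(X,Y)\rangle$ differs from $\la_{\et}(X,Y)$ only by the coboundary of the 1-cochain $X \mapsto \int_{P}\et\wedge\tilde{s}(X)$ (for any lift $\tilde{s}$ of $s$ to $\Om^{m-2}(P)$), hence represents the same cohomology class.

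The main obstacle is the careful bookkeeping in this pairing: although each step is elementary, one must verify that the section-dependent corrections are genuine continuous coboundaries, and that the Poincar\'e duality pairing is compatible with integration of the $H^{m-2}_{\dR}(P)$-valued cocycle against the closed 2-form $\et$. Once this is in hand, perfectness of $\widehat{\g} = \Om^{m-2}(P)/d\Om^{m-3}(P)$, needed for uniqueness of the comparison morphism in the universal property, follows formally from the bijectivity just established: any nonzero continuous functional on $\widehat{\g}/[\widehat{\g},\widehat{\g}]$ would restrict to a nonzero $\varphi \in \mathfrak{z}^{*}$ with $[\varphi \circ \omega] = 0$, contradicting injectivity.
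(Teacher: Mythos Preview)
Your argument is correct and is exactly the intended elaboration of the paper's one-line claim that the result is ``a consequence of Theorem~\ref{CEvolume}''.  The paper gives no further proof; your use of the standard universality criterion for perfect Lie algebras (identifying the characteristic-class map $\mathfrak{z}^{*}\to H^{2}(\g,\R)$ with the isomorphism of Theorem~\ref{CEvolume} via Poincar\'e duality, and deducing perfectness of $\widehat{\g}$ from injectivity) is precisely how one unpacks that sentence.  One small slip: the pointwise identity should read $\eta\wedge i_{X}i_{Y}\mu = -\eta(X,Y)\,\mu$, so the characteristic-class map sends $[\eta]$ to $-[\lambda_{\eta}]$ rather than $[\lambda_{\eta}]$; this is harmless for the isomorphism claim.
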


\subsection{Symplectic vector fields.}
In order to describe the universal central extension of $\X_{\ham}(M,\omega)$,
we introduce the \emph{canonical homology} $H^{\mathrm{can}}_{\bullet}(M,\pi)$ of 
a Poisson manifold $(M,\pi)$. It
is defined as the homology of the complex $\Omega^{\bullet}(M)$, equipped with the 
degree decreasing differential (\cite{K84})
\[
\delta = i_{\pi}d - di_{\pi}\,.
\]
For the special case of a Poisson structure $\pi = \omega^{\sharp}$ 
derived from a symplectic form, 
the following result of Brylinski expresses $\delta$
in terms of the \emph{symplectic Hodge star operator}
$* \colon \Omega^{k}(M)\rightarrow \Omega^{2n-k}(M)$.
This is the natural analogue 
of the 
ordinary (Riemannian) Hodge star operator in the symplectic context,
defined uniquely by the requirement that
$
\alpha \wedge *\beta = \wedge^{k}\omega(\alpha,\beta) \omega^n/n!
$
for all $\alpha, \beta \in \Omega^{k}(M)$.
\begin{Theorem}{\rm (\cite{Br88})} If $\pi = \omega^{\sharp}$ for a symplectic manifold $(M,\omega)$,
then the differential $\de:\Om^k(M)\to\Om^{k-1}(M)$ is
\[\delta = (-1)^{k+1}*d*\,.\]
\end{Theorem}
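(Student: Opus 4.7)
The plan is to verify the operator identity $\de = (-1)^{k+1}*d*$ by reducing it to a pointwise calculation in Darboux coordinates, and then organizing the result via the $\mathfrak{sl}_2$-structure that $\om$ induces on differential forms. First I would fix $p \in M$ and choose Darboux coordinates $(x^1,\dots,x^n,y^1,\dots,y^n)$ centered at $p$, so that $\om = \sum_i dx^i\wedge dy^i$ and the associated Poisson bivector reads $\pi = \sum_i \partial_{y^i}\wedge \partial_{x^i}$. Contraction with $\pi$ then becomes the degree $-2$ derivation $i_\pi = \sum_i i_{\partial_{y^i}} i_{\partial_{x^i}}$, and the symplectic Hodge star admits an explicit description on the monomial basis $dx^I\wedge dy^J$, dictated by the defining identity $\al \wedge *\be = \wedge^k\om(\al,\be)\,\om^n/n!$; from this description one also checks directly that $*^2 = \id$.

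Next, since both $\de = i_\pi d - d\,i_\pi$ and $*d*$ are $\R$-linear first-order differential operators $\Om^k(M)\to\Om^{k-1}(M)$, the claim reduces to comparing them on a generic monomial $\be = f(x,y)\,dx^I\wedge dy^J$. Expanding $d\be$ term by term and applying the explicit formulas for $i_\pi$ and for $*$, one finds that $(-1)^{k+1}*d*\be$ and $\de\be$ produce identical sums. A more conceptual organization uses the Lefschetz operators $L = \om\wedge(\cdot)$ and $\La = i_\pi$, which together with the grading operator $H|_{\Om^k} = (n-k)\id$ form an $\mathfrak{sl}_2$-triple with $[\La,L] = H$. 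The symplectic Hodge star intertwines raising and lowering in the form $*L = \La\,*$ and $*\La = L\,*$, so writing $\de = [\La,d]$ and conjugating $d$ by $*$ reduces the identity to a verification on primitive forms, where $\La$ acts trivially and only the interaction between $d$ and $L$ needs to be tracked.

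The main obstacle is sign bookkeeping: the factor $(-1)^{k+1}$ arises from commuting $d$ past the outer $*$ on a $k$-form and must be tracked carefully through the bidegrees $(|I|,|J|)$ of monomials; in the coordinate-free route the same subtlety is concentrated in the verification of $*L = \La\,*$ and in the action of $d$ on primitive components of the Lefschetz decomposition. Because the computation is carried out at an arbitrary point $p$ in its own Darboux chart, and because both $\de$ and $*d*$ are intrinsically defined operators, the local identity extends to all of $M$.
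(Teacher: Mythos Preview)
The paper does not prove this theorem; it is merely quoted from Brylinski's article~\cite{Br88} and used as input for the rest of the section. There is therefore no proof in the paper to compare your proposal against.

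That said, your outline is correct and is essentially Brylinski's own argument. The $\mathfrak{sl}_2$-route you sketch---$L=\omega\wedge(\cdot)$, $\Lambda=i_\pi$, $H|_{\Omega^k}=(n-k)\,\id$, together with $*^2=\id$ and the intertwining $*L=\Lambda\,*$---is the standard mechanism. The one place where your description is a bit thin is the final ``verification on primitive forms'': in practice one uses the explicit Weil-type formula expressing $*(L^j\alpha)$ as a constant times $L^{n-p-j}\alpha$ for a primitive $p$-form~$\alpha$, together with the fact that for primitive~$\alpha$ the differential decomposes as $d\alpha=(d\alpha)_{0}+L\beta$ with $(d\alpha)_0$ primitive of degree $p+1$ and $\beta$ primitive of degree $p-1$. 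With those two ingredients the comparison of $\delta=[\Lambda,d]$ and $(-1)^{k+1}*d*$ on each Lefschetz component $L^j\alpha$ becomes a straightforward bookkeeping of powers of $L$ and combinatorial constants, and your remark about the origin of the sign $(-1)^{k+1}$ is accurate. The purely coordinate computation you mention first also works, but is less illuminating and more error-prone than the $\mathfrak{sl}_2$ organisation you describe in the second paragraph.
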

In particular, this shows that the $*$-operator yields an isomorphism between the canonical complex 
\begin{equation}\label{cancplx}
\cdots
\longrightarrow
\Omega^{2}(M)
\stackrel{\delta}{\longrightarrow}
\Omega^{1}(M)
\stackrel{\delta}{\longrightarrow}
\Omega^{0}(M)
\rightarrow
0
\end{equation}
and the de Rham complex
\begin{equation}
\cdots
\longrightarrow
\Omega^{2n-2}(M)
\stackrel{d}{\longrightarrow}
\Omega^{2n-1}(M)
\stackrel{d}{\longrightarrow}
\Omega^{2n}(M)
\rightarrow
0\,.
\end{equation}
We thus obtain an isomorphism $H^{\mathrm{can}}_{k}(M,\omega^{\sharp}) \simeq H^{2n-k}_{\mathrm{dR}}(M)$.
\subsubsection{Description of the universal central extension}

The universal central extension of $\X_{\ham}(M,\omega)$ is now obtained from \eqref{cancplx} 
by quotienting out $\delta\Omega^{2}(M)$.
The  space $\Omega^1(M)/\delta \Omega^2(M)$  becomes a Lie algebra with the bracket (\cite{JV15})
\begin{equation}\label{dde}
[[\al],[\be]]=[\de\al\, d\de\be]\,,
\end{equation}
and
the Lie algebra homomorphism 
$[\al] \mapsto X_{\de\al}$ yields the central extension 
\begin{equation}\label{unihambis}
H^{\mathrm{can}}_{1}(M,\omega^{\sharp}) \to \Omega^1(M)/\delta\Omega^2(M)\stackrel{q}{\to}\X_{\ham}(M,\om)\,.
\end{equation}

A different description of this extension is obtained if we identify the Lie algebra  $\Omega^1(M)/\delta \Omega^2(M)$  with $\Omega^{2n-1}(M)/d\Omega^{2n-2}(M)$
using the symplectic Hodge star operator.
The Lie bracket \eqref{dde} becomes
\begin{equation}\label{libra}
[[\ga_1],[\ga_2]]=[f_1 df_2\wedge\om^{n-1}/(n-1)!],
\end{equation}
where the function $f \in C^\oo(M)$ is uniquely defined by the form
$\ga\in\Om^{2n-1}(M)$ via $f\om^n/n!=d\ga$.
The central extension \eqref{unihambis} thus becomes
\begin{equation}\label{uniham}
H^{2n-1}_{\dR}(M)
\to\Om^{2n-1}(M)/d\Om^{2n-2}(M)\to\X_{\ham}(M,\om)\,.
\end{equation}

\begin{Theorem}{\rm (\cite{JV15})}
The central extension \eqref{unihambis} of $\X_{\ham}(M,\om)$, and hence also 
the isomorphic extension \eqref{uniham},
is universal.
\end{Theorem}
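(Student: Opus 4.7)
\emph{Approach.} The plan is to use the standard criterion that, for a perfect locally convex Lie algebra $\g$, a continuous, linearly split central extension $\mathfrak{z} \to \widehat{\g} \to \g$ is universal if and only if $\widehat{\g}$ is itself perfect and the transgression $\tau\colon \mathfrak{z}^* \to H^2(\g,\R)$ induced by the extension cocycle is a linear isomorphism (equivalently, $\widehat{\g}$ is perfect and $H^2(\widehat{\g},\R)=0$). Since $\X_{\ham}(M,\om)$ is perfect by Proposition \ref{hamp}, the task reduces to verifying both properties for $\widehat{\g} := \Om^{2n-1}(M)/d\Om^{2n-2}(M)$ (equivalently for $\Om^1(M)/\de\Om^2(M)$).

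\emph{Perfectness of $\widehat{\g}$.} Perfectness of $\X_{\ham}(M,\om)$ immediately yields $[\widehat{\g},\widehat{\g}] + \mathfrak{z} = \widehat{\g}$, so it remains to show that the central subspace $\mathfrak{z} \cong H^{2n-1}_{\dR}(M)$ lies in $[\widehat{\g},\widehat{\g}]$. Via \eqref{libra}, this amounts to writing every closed $(2n-1)$-form, modulo $d\Om^{2n-2}(M)$, as a finite sum $\sum_i f_i\,dg_i \wedge \om^{n-1}/(n-1)!$ in which each $f_i, g_i$ has vanishing symplectic integral (the admissibility condition to lie in $\de\Om^1(M)$, since $H^{\can}_0(M,\om^\sharp)\cong\R$). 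A Darboux chart together with a partition of unity provides such a sum, and the mean-value conditions on $f_i, g_i$ can then be arranged by absorbing constants into exact corrections.

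\emph{Transgression is an isomorphism.} Choose a linear splitting $\si\colon\X_{\ham}(M,\om) \to \widehat{\g}$ and form the extension cocycle $c(X,Y) := [\si X,\si Y]_{\widehat{\g}} - \si([X,Y])$, valued in $\mathfrak{z}$. Under the de Rham pairing $\mathfrak{z}^* = H^{2n-1}_{\dR}(M)^* \cong H_{2n-1}(M,\R)$, any $\lambda \in \mathfrak{z}^*$ is represented by a singular $(2n-1)$-cycle $C$. A direct computation using \eqref{libra} then identifies $\lambda \circ c$, modulo coboundary, with the singular cocycle $\psi_C$ of \eqref{rogsing}. By Proposition \ref{PDforsymp}, $[\psi_C] = [\psi_\al]$ with $[\al] \in H^1_{\dR}(M)$ Poincar\'e dual to $[C]$, and the isomorphism \eqref{iso2} then presents $\tau$ as the composition of Poincar\'e duality $\mathfrak{z}^* \cong H^1_{\dR}(M)$ with the Roger isomorphism $[\al] \mapsto [\psi_\al]$. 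Hence $\tau$ is an isomorphism, and universality follows.

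\emph{Main obstacle.} The principal technical step is the explicit identification of $\tau$ with Poincar\'e duality followed by the Roger isomorphism: this requires tracking the Leibniz bracket \eqref{leib} through the quotient to \eqref{libra}, computing $\lambda \circ c$ from a concrete splitting $\si$, and matching the outcome with $\psi_C$ up to a coboundary (the latter being an adjustment by a 1-cochain of the form $X_f\mapsto\int_C\al_f$). Once this is in place, Proposition \ref{PDforsymp} and \eqref{iso2} close the argument. Perfectness of $\widehat{\g}$ is more routine but still demands the careful Darboux/partition of unity decomposition and the constant-absorption trick outlined above.
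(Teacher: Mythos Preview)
Your framework is standard and correct in the abstract, but the argument is circular within the logical structure of this paper. You invoke the Roger isomorphism \eqref{iso2} to conclude that the transgression $\tau\colon\mathfrak z^*\to H^2(\X_{\ham}(M,\om),\R)$ is bijective; however, in this paper \eqref{iso2} is not an independent input but a \emph{corollary} of the very theorem you are proving. It is deduced in the paragraph immediately following the proof, from the identification $H^2(\X_{\ham}(M,\om),\R)\simeq\mathfrak z^*$ that universality itself supplies. The paper's proof runs in the opposite direction: it classifies \emph{all} continuous $2$-cocycles on $C^\infty(M)$ from scratch (diagonality of $\psi$, Peetre's theorem to get a differential operator, an explicit local-coordinate reduction to order one), obtaining that every cocycle has the form $\psi(f,g)=S(f\,dg)$ for a current $S$ vanishing on $\delta\Omega^2(M)$. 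This yields $q^*H^2(\X_{\ham}(M,\om),\R)=0$ directly, hence universality, and only afterwards is \eqref{iso2} read off.

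The substantive gap, then, is that you have outsourced the main analytic content---the Peetre-type classification of cocycles---to a citation of \eqref{iso2}, whose only known proof (in \cite{JV15}) \emph{is} precisely that classification. If you want to keep your transgression criterion, you must supply an independent argument for surjectivity of $[\al]\mapsto[\psi_\al]$, and none is offered. Your treatment of perfectness of $\widehat\g$ via Darboux charts and constant-absorption is reasonable (and is in fact a point the paper's outline glosses over), but it does not repair the circularity in the transgression step.
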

\begin{proof}
We give only an outline of the proof, and refer to \cite{JV15} for details.
The main point is to classify continuous $\R$-values 2-cocycles on $C^{\infty}(M)$.
Every such cocycle  
gives rise to an antisymmetric derivation $D:C^\oo(M)\to C^\oo(M)'$ by $D(f)g=\ps(f,g)$.
One first shows that $\psi$ must be diagonal, in the sense that
$\ps(f,g)=0$ whenever the supports of $f$ and $g$ are disjoint. This implies that 
$D$ is support decreasing, hence a differential operator of locally finite order by Peetre's theorem.
Using the fact that $D$ is a derivation, an explicit calculation in local coordinates 
shows that this differential operator is of order 1. In fact, $\psi$ is of the form $\psi(f,g) = S(fdg)$ for some 
current $S \in \Omega^{1}(M)'$. The closedness of $\psi$ then implies that $S$ vanishes on 
$\delta \Omega^2(M)$, so that it factors through a functional $\bar S:\Omega^{1}(M)/\delta \Omega^2(M) \rightarrow \R$.

We get that every 2-cocycle $\ps$ on $\X_{\ham}(M,\om)$
is of the form $
\ps(X_{\de\al},X_{\de\be})=\bar S([\de\al\, d\de\be])$,
so $q^*\ps$ is the coboundary of $\bar S$, by \eqref{dde}.
Thus $q^* H^2(\X_{\ham}(M,\om),\R) =0$, which shows the universality
of the central extension \eqref{unihambis}.
\end{proof}

Since $H^2(\g,\R)$ is isomorphic to the dual of the centre of \eqref{unihambis}, 
we find in particular 
\begin{equation}
H^2(\X_{\ham}(M,\om),\R) \simeq H_{1}^{\mathrm{can}}(M,\omega^{\sharp})^*\,.
\end{equation}
Identifying $H_{1}^{\mathrm{can}}(M,\omega^{\sharp})^*$ with the singular homology $H_{2n-1}(M,\R)$, 
we find that the map $[C] \mapsto [\psi_{C}]$ of \eqref{rogsing} is an isomorphism 
\begin{equation}
H_{2n-1}(M,\R) \simeq H^2(\X_{\ham}(M,\om),\R)\,.
\end{equation}
Similarly, we can
identify $H_{1}^{\mathrm{can}}(M,\omega^{\sharp})^*$
with $H^1_{\mathrm{dR}}(M)$ using Poincar\'e duality. This shows that
the map $[\alpha] \mapsto [\psi_{\alpha}]$
of \eqref{rog} is an isomorphism
\begin{equation}
H^{1}_{\mathrm{dR}}(M) \simeq H^2(\X_{\ham}(M,\om),\R)\,,
\end{equation}
as conjectured by Roger in \cite{R95}.

\section{Integrability of Lie algebra extensions}

Not every Lie algebra 2-cocycle is integrable to a central Lie group extension: the main obstruction is the period homomorphism \cite{N02}.
In this section we give conditions for integrability of the restriction
of Lichnerowicz cocycles to $\X_{\ex}(P,\mu)$ as well as integrability
 of Roger cocycles both on $C^\oo(M)$ and $\X_{\ham}(M,\om)$.

\subsection{Non-linear Grassmannians.}
The {\it non-linear Grassmannian} $\Gr_{k}(M)$ is the space of $k$-dimensional, compact, oriented, embedded, 
submanifolds $N \subseteq M$.  It is a Fr\'echet manifold in a natural way, see 
\cite[\S 44]{KM97}.
The tangent space of $\OGr_k(M)$ at $N$ can
be naturally identified with the space of smooth sections of the normal
bundle $TN^\perp:=(TM|_N)/TN$.

We 
have a transgression map $\tau \colon \Omega^{k+r}(M) \rightarrow \Omega^{r}(\Gr_{k}(M))$ defined by 
$$
(\tau{\alpha})([Y_1],\ldots,[Y_r]) := \int_{N}i_{Y_r}\ldots i_{Y_{1}}(\alpha|_{N}).
$$
Here all $[Y_j]$ are tangent vectors at $N\in\OGr_k(M)$, \ie sections
of the normal bundle $TN^\perp$. The expression above is independent of the vector fields $Y_j$ on $M$ along $N$ chosen to represent $[Y_j]$.

The Lie group $\mathrm{Diff}(M)$ acts smoothly on $\Gr_{k}(M)$ by 
$(\ph , N) \mapsto \ph(N)$. For this action, 
the fundamental vector field $\tau_{X} \in \X(\Gr_{k}(M))$ of $X\in \X(M) $ is
given by  $\ta_X(N) = [X|_{N}]$.
The transgression enjoys the following functorial properties:
\begin{align}\label{fun}
&d\circ \tau = \tau \circ d,\quad\quad\quad \ph^* \circ \tau = \tau \circ \ph^*, \\
&i_{\ta_{X}}\circ \tau = \tau\circ i_{X},\quad L_{\ta_X} \circ \tau = \tau\circ L_{X}.\nonumber
\end{align}
In particular, a closed differential form $\al\in\Om^{k+2}(M)$ transgresses to a closed
2-form $\Omega = \tau \alpha$ on $\Gr_{k}(M)$.
Just like in the finite dimensional case, a (possibly degenerate) symplectic manifold 
$(\mathcal M,\Om)$ is called \emph{prequantizable} if it admits a 
\emph{prequantum bundle} $\mathcal P\to\mathcal M$, 
\ie a principal $S^1$-bundle with connection 
form $\Th\in\Om^1(\mathcal P)$ whose curvature is $\Om$.
\begin{Theorem}{\rm (\cite{I96} \cite
{HV04})}\label{taual} 
Let $\al\in\Om^{k+2}(M)$ be a closed differential form with integral cohomology class. Then
the non-linear Grassmannian $\OGr_{k}(M)$ endowed with the closed 2-form $\Om=\tau\al$ 
is prequantizable.
\end{Theorem}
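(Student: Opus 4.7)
The plan is to reduce prequantizability to integrality of the periods of $\Om=\tau\al$, and then to compute those periods by fiber integration back to $M$, where integrality is given by hypothesis.

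In the Fr\'echet setting (\cf\ \cite{N02}), a closed 2-form $\Om$ on a manifold $\mathcal M$ admits a prequantum principal $S^1$-bundle with connection precisely when its period homomorphism
\[
\mathrm{per}_\Om\colon H_2(\mathcal M,\Z)\to\R,\qquad [f]\mapsto\int_\Si f^*\Om,
\]
evaluated on smooth cycles $f\colon\Si\to\mathcal M$, takes values in $\Z$. The bundle is then built explicitly from based paths in $\mathcal M$, identifying paths whose concatenation bounds a disc of $\Om$-area in $\Z$, with connection form induced by $\Om$.

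The main step is to compute $\mathrm{per}_\Om$ for $\mathcal M=\OGr_k(M)$ and $\Om=\tau\al$. Given a smooth $f\colon\Si\to\OGr_k(M)$ from a closed oriented surface, I would form the \emph{incidence manifold}
\[
\widetilde N_f=\{(s,x)\in\Si\x M\,;\,x\in f(s)\}\subseteq\Si\x M,
\]
which is a closed oriented $(k+2)$-manifold for which the projection $\mathrm{pr}_1\colon\widetilde N_f\to\Si$ is a smooth fiber bundle with fiber $f(s)$. Writing $\mathrm{pr}_2\colon\widetilde N_f\to M$ for the other projection, and parametrizing nearby leaves as graphs $x\mapsto\exp_x(X_s(x))$ of normal sections in a tubular neighborhood of $f(s_0)$, the defining formula $(\tau\al)([Y_1],[Y_2])=\int_N i_{Y_2}i_{Y_1}(\al|_N)$ is recognized as integration of $\mathrm{pr}_2^*\al$ along the fibers of $\mathrm{pr}_1$. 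Fubini then yields
\[
\int_\Si f^*(\tau\al)=\int_{\widetilde N_f}\mathrm{pr}_2^*\al=\langle[\al],(\mathrm{pr}_2)_*[\widetilde N_f]\rangle\in\Z,
\]
the last inclusion being the integrality of $[\al]$.

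The main obstacle is the first step: transferring the classical Kostant--Souriau prequantization theorem to the Fr\'echet manifold $\OGr_k(M)$, where $\Om$ may well be degenerate. This is precisely what the infinite-dimensional machinery of \cite{N02} is designed for, and its sole integrality input is $\mathrm{per}_\Om\subseteq\Z$. Verifying the fiber-integration identity in the second step is conceptually transparent but does require a careful local argument to make sure that the orientations on $\widetilde N_f$, the fibers of $\mathrm{pr}_1$, and $\Si$ match the sign conventions in the definition of $\tau$.
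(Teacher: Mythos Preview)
The paper does not give its own proof of this theorem; it is quoted from \cite{I96} and \cite{HV04}. Your strategy---reduce prequantizability to integrality of the period homomorphism, then evaluate periods of $\tau\alpha$ by passing to the incidence manifold $\widetilde N_f\subset\Sigma\times M$ and using fiber integration---is precisely the argument carried out in \cite{HV04}. The key identity $\int_\Sigma f^*(\tau\alpha)=\int_{\widetilde N_f}\mathrm{pr}_2^*\alpha=\langle[\alpha],(\mathrm{pr}_2)_*[\widetilde N_f]\rangle$ is correct and is the heart of the matter; your remark that the smooth structure on $\OGr_k(M)$ guarantees $\widetilde N_f$ is a genuine smooth fiber bundle over $\Sigma$ is also right, and is what makes the Fubini step legitimate.

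The one place to tighten is your appeal to \cite{N02} for the abstract criterion. That paper treats period homomorphisms $\pi_2(G)\to\R$ for central extensions of infinite-dimensional Lie \emph{groups}, not prequantum bundles over general Fr\'echet manifolds. The path-space construction you sketch produces a bundle over the simply connected cover, and descending it requires that $\Om$ have integral periods over \emph{all} of $H_2$, not just spherical classes; moreover, in infinite dimensions one needs some form of smooth paracompactness (or an explicit \v{C}ech argument) to pass from integral periods to an honest principal $S^1$-bundle with connection. For $\OGr_k(M)$ these hypotheses are met, and the precise statement you want is in \cite{NV03} (or in Brylinski's loop-space book); \cite{HV04} invokes exactly this. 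So your outline is correct, but the first step rests on \cite{NV03} rather than \cite{N02}.
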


A \emph{quantomorphism} of a prequantum bundle
$\mathcal P$ is a connection-preserving automorphism.
We denote the quantomorphism group of $\mathcal P$
by
\[
\Diff(\mathcal P,\Th)=\{\ph\in\Diff(\mathcal P):\ph^*\Th=\Th\},
\]
and its identity component by $\Diff (\mathcal P,\Th)_0$.
We denote
by
$\Diff_{\ham}(\mathcal M,\Om)$ the group of Hamiltonian diffeomorphisms of
the underlying symplectic manifold
$({\mathcal M}, \Omega)$.
The {\it prequantization central extension} of $(\mathcal M,\Om)$
is (\cite{Ko70,S70})
\begin{equation}\label{kostant}
S^1\to\Diff(\mathcal P,\Th)_0\to\Diff_{\ham}(\mathcal M,\Om)\,.
\end{equation}
If $\mathcal M$ is finite dimensional, then this is a central extension of Fr\'echet Lie groups \cite{RS}. If $\mathcal{M}$ is infinite dimensional, then \eqref{kostant} is merely a central extension of topological groups. Nonetheless, we have the following theorem:

\begin{Theorem}{\rm (\cite{NV03})}\label{pullb}
Let $\mathcal{P} \rightarrow \mathcal{M}$ be a prequantum bundle over a connected, symplectic, 
Fr\'echet manifold $(\mathcal M,\Om)$, 
and let $G\to\Diff_{\ham}(\M,\Om)$ be a Hamiltonian action on $\mathcal{M}$ by a connected,
Fr\'echet Lie group $G$.  
Then
the pullback of the prequantization central extension \eqref{kostant} by a Hamiltonian Lie group action 
$G\to\Diff_{\ham}(\M,\Om)$ 
provides a  central Lie group extension of $G$. 
\end{Theorem}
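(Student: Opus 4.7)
The plan is to construct the pullback
\[
\widehat{G} := G \times_{\Diff_{\ham}(\M,\Om)} \Diff(\mathcal{P},\Th)_0
\]
as a set and equip it with the structure of a Fr\'echet Lie group fitting into a central $S^1$-extension of $G$. The challenge is that in infinite dimensions the total space $\Diff(\mathcal{P},\Th)_0$ is only a topological group, so the smooth structure on $\widehat{G}$ must be built directly from the smooth Hamiltonian $G$-action, not pulled back from \eqref{kostant}. As a first step, differentiate the action to a Lie algebra homomorphism $\rho_*:\g\to \X_{\ham}(\M,\Om)$ and lift it through \eqref{back} to a continuous linear map $\mu:\g\to C^\oo(\M)$; its failure to be a Lie homomorphism defines a continuous $\R$-valued 2-cocycle $c(\xi,\et)=\{\mu(\xi),\mu(\et)\}-\mu([\xi,\et])$ on $\g$, and hence a central Lie algebra extension $\R\to\widehat{\g}\to\g$, canonically the pullback of \eqref{back} along $\rho_*$.

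Next, integrate $c$ to a Lie group extension of $G$ via Neeb's integrability criterion \cite{N02}: one must verify that the period homomorphism $\mathrm{per}_c:\pi_2(G)\to\R$ (and, when $G$ is not simply connected, an additional flux obstruction coming from $\pi_1(G)$) takes values in the discrete subgroup $\Z\subset\R$. The prequantum datum enters decisively at this point: integrality of $[\Om]\in H^2_{\dR}(\M)$, together with $d\Th=\pi^*\Om$ and the existence of the prequantum bundle $\mathcal{P}$, forces the periods to reduce modulo $\Z$ to zero. Neeb's theorem then produces an abstract central Fr\'echet Lie group extension $S^1\to\widehat G\to G$.

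It remains to identify this $\widehat{G}$ with the set-theoretic pullback. Given a smooth path $g_t$ in $G$ starting at the identity, the time-dependent Hamiltonian vector field on $\M$ induced by $g_t$ has Hamiltonian function $\mu_t$ smoothly parametrised by $G$; the connection-preserving lift $\widehat{X_t}=\mathrm{hor}(X_t)+\mu_t\partial_\theta$ on $\mathcal{P}$ integrates to a path in $\Diff(\mathcal{P},\Th)_0$ projecting to $\rho(g_t)$, whose endpoint, taken modulo $S^1$, yields an element of the pullback over $g_1$. Two smooth paths joining the same endpoints differ by the holonomy of their concatenation, an element of $S^1$, so the construction descends to a bijection between $\widehat{G}$ and the pullback that intertwines projections and $S^1$-actions.

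The main obstacle is proving smoothness of this whole picture. Since $\Diff(\mathcal{P},\Th)_0$ carries no Fr\'echet Lie group structure to pull back from, one has to show by hand that the map sending $g\in G$ (in a chart around the identity) to the endpoint of the lifted path depends smoothly on $g$. This reduces, via the smooth dependence of flows of time-dependent vector fields on parameters, to smoothness of $\mu:\g\to C^\oo(\M)$ and of the $G$-action on $\M$, both of which come for free from the Hamiltonian hypothesis. Piecing these smooth local sections together into a principal $S^1$-bundle over $G$ and verifying smoothness of multiplication and inversion is, as in \cite{NV03}, the technical heart of the argument.
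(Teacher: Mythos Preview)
The paper does not prove this theorem: it is quoted with attribution to \cite{NV03} and used as a black box for the subsequent integrability results (Theorems~\ref{lint} and~\ref{thm}). There is therefore no in-paper argument to compare your proposal against.

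That said, your outline is a faithful reconstruction of the standard strategy. A minor remark on organisation: you route the construction through Neeb's abstract integrability criterion \cite{N02}, producing an extension of $G$ first and then identifying it with the set-theoretic pullback via holonomy of lifted paths. In \cite{NV03} the logic runs the other way: one builds smooth local sections of the pullback $\widehat G\to G$ directly, using smooth local sections of $\mathcal P\to\mathcal M$ composed with orbit maps of the smooth $G$-action, and checks that the resulting transition functions are smooth $S^1$-valued. The existence of the prequantum bundle then plays the role that, in your version, is played by the discreteness of the period group; the two are essentially equivalent, but the direct route avoids having to separately compute $\mathrm{per}_c$ on $\pi_2(G)$ and the flux obstruction on $\pi_1(G)$.

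One step in your sketch deserves more care: the claim that integrality of $[\Om]$ forces $\mathrm{per}_c(\pi_2(G))\subseteq\Z$ is not automatic. You need to relate the period of the left-invariant $2$-form determined by $c$ over a sphere $\sigma\colon S^2\to G$ to an integral of $\Om$ over a suitable $2$-cycle in $\mathcal M$; this uses the equivariant moment map and a homotopy argument, and is where the Hamiltonian (rather than merely symplectic) hypothesis on the action is genuinely needed.
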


The corresponding Lie algebra extension of $\g= \mathrm{Lie}(G)$ is defined by the Lie algebra 
2-cocycle $(X,Y)\mapsto \Om(X_{\M},Y_{\M})(x_0)$,
where $x_0\in\M$ and $X_{\M},Y_{\M}$ are the fundamental vector fields on $\M$ 
with generators $X,Y\in\g$.

\subsection{Divergence free vector fields}

The group $\Diff_{\ex}(P,\mu)$ of exact volume preserving diffeomorphisms, defined as the kernel of Thurston's flux homomorphism \cite[\S 3]{B},
is a Fr\'echet Lie group with Lie algebra $\X_{\ex}(P,\mu)$.
We show that under certain integrality conditions, 
the restriction of the Lichnerowicz cocycles to the commutator ideal 
$\X_{\ex}(P,\mu) \subseteq \X(P,\mu)$
are integrable to Lie group extensions of $\Diff_{\ex}(P,\mu)$.

Since the cohomology class $\frac{k}{\mathrm{vol}_\mu(P)}[\mu]$
is integral for every $k\in \Z$, the 
non-linear Grassmannian $\OGr_{m-2}(P)$ with symplectic form 
$\Om=\frac{k}{\mathrm{vol}_\mu(P)}\tau\mu$ is prequantizable
by Theorem \ref{taual}.
Since the natural action of $\Diff_{\ex}(P,\mu)$
on $\OGr_{m-2}(P)$ is Hamiltonian,
we can apply Theorem \ref{pullb}   
to the connected component $\M$ of $Q\in\OGr_{m-2}(P)$.
This yields the central Lie group extension 
\begin{equation}\label{extdiffexgroup}
S^1 \rightarrow \widehat{\Diff}_{\ex}(P,\mu) \rightarrow \Diff_{\ex}(P,\mu)\,.
\end{equation}
The corresponding Lie algebra extension of $\X_{\ex}(P,\mu)$  is described by a multiple of the 
cocycle  $\la_Q$ of equation \eqref{lichsing}, namely
\begin{equation}\label{lic}
\left(\frac{k}{\mathrm{vol}_\mu(P)}\tau \mu\right)_{Q}(\ta_X,\ta_Y)
= \frac{k}{\mathrm{vol}_\mu(P)}\int_{Q}i_Yi_X\mu.
\end{equation}

\begin{Theorem}\label{lint}{\rm (\cite{I96,HV04})}
Let  $Q\subset P$ be a codimension two embedded submanifold. Then the
Lie algebra extensions defined by integral multiples of the
cocycle $\frac{1}{\mathrm{vol}_\mu(P)}\la_{Q}$
integrate to central Lie group extensions of the group of exact volume preserving diffeomorphisms $\Diff_{\ex}(P,\mu)$.
\end{Theorem}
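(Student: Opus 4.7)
The plan is to realize the desired central extensions of $\Diff_\ex(P,\mu)$ by pulling back prequantization extensions from non-linear Grassmannians, as enabled by Theorem \ref{pullb}. For each $k \in \Z$ the class $\tfrac{k}{\vol_\mu(P)}[\mu] \in H^m_{\dR}(P)$ is integral, so by Theorem \ref{taual} the non-linear Grassmannian $\OGr_{m-2}(P)$, equipped with the transgressed 2-form
\[
\Om = \frac{k}{\vol_\mu(P)}\tau\mu,
\]
admits a prequantum bundle $\mathcal P \to \OGr_{m-2}(P)$. I would then restrict to the connected component $\M$ of the given submanifold $Q$, obtaining a (possibly degenerate) symplectic Fr\'echet manifold $(\M,\Om|_\M)$ carrying a natural action of $\Diff_\ex(P,\mu)$ by $(\ph,N) \mapsto \ph(N)$.

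The main step is to upgrade this to a Hamiltonian action, so that Theorem \ref{pullb} applies. For $X_\al \in \X_\ex(P,\mu)$ with primitive $\al \in \Om^{m-2}(P)$, the natural candidate Hamiltonian is
\[
H_\al(N) := \frac{k}{\vol_\mu(P)}\int_N \al.
\]
Since all $N \in \M$ are mutually homologous in $P$, two choices of primitive for $X_\al$ change $H_\al$ only by an additive constant, so $H_\al$ is well-defined on $\M$ modulo constants. Using Cartan's formula together with Stokes's theorem on the closed submanifold $N$, and the transgression identities \eqref{fun}, a short computation gives
\[
dH_\al([Y])(N) = \frac{k}{\vol_\mu(P)}\int_N i_Y i_{X_\al}\mu = (i_{\tau_{X_\al}}\Om)([Y])(N),
\]
confirming that $H_\al$ is a Hamiltonian for the fundamental vector field $\tau_{X_\al}$. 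A parallel calculation on the ambiguity in $[Y] \in \Gamma(TN^\perp)$ shows that $(i_Y d\al)|_N$ vanishes whenever $Y|_N$ is tangent to $N$, so the right-hand side descends cleanly to the normal bundle.

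With the Hamiltonian action in hand, Theorem \ref{pullb} supplies a central $S^1$-extension $\widehat{\Diff}_\ex(P,\mu) \to \Diff_\ex(P,\mu)$, whose associated Lie algebra 2-cocycle is obtained by evaluating $\Om$ on the fundamental vector fields at the base point $Q$:
\[
\Om(\tau_{X_\al},\tau_{X_\be})(Q) = \frac{k}{\vol_\mu(P)}\int_Q i_{X_\be} i_{X_\al}\mu = \frac{k}{\vol_\mu(P)}\la_Q(X_\al,X_\be),
\]
by the definition of transgression and equation \eqref{lichsing}. This is the desired integral multiple of $\tfrac{1}{\vol_\mu(P)}\la_Q$. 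The principal technical obstacle, I expect, is not this cocycle identity --- which follows from transgression functoriality --- but the Fr\'echet bookkeeping underlying ``Hamiltonian action'': one must check that $\M$ is a Fr\'echet submanifold of $\OGr_{m-2}(P)$, that the assignment $\al \mapsto H_\al$ assembles into a smooth moment map depending smoothly on $N$, and that the ambiguity in lifting $X_\al \in \X_\ex(P,\mu)$ to a primitive $\al$ is controlled well enough for $X_\al \mapsto [H_\al] \in C^\oo(\M)/\R$ to be a Lie algebra homomorphism.
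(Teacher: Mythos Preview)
Your argument is essentially the one the paper gives in the paragraph preceding the theorem: integrality of $\tfrac{k}{\vol_\mu(P)}[\mu]$ yields a prequantum structure on $(\Gr_{m-2}(P),\Omega)$ via Theorem~\ref{taual}, the natural $\Diff_\ex(P,\mu)$-action on the component $\M$ of $Q$ is Hamiltonian, and Theorem~\ref{pullb} then produces the extension with cocycle computed by~\eqref{lic}. You add the explicit moment map $H_\alpha(N)=\tfrac{k}{\vol_\mu(P)}\int_N\alpha$ (which the paper leaves implicit) and correctly identify the Fr\'echet-analytic details as the only subtleties; these are handled in the cited references \cite{I96,HV04}.
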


A class $[\eta]\in H^2_{\dR}(P)$ is integral if and only if  it is  
Poincar\'e dual to the singular homology class $[Q] \in  H_{m-2}(P,\R)$ of 
a closed, codimension 2 submanifold of $P$ (cf.\ \cite[\S 12]{BT82}). 
We infer that if $[\eta]$ is integral, then the integral multiples 
$\frac{k}{\mathrm{vol}_\mu(P)}\la_\et$ of the Lichnerowicz cocycle give rise to integrable Lie algebra extensions
of $\X_{\ex}(P,\mu)$. 

\subsection{Symplectic vector fields}
We study the integrability of Roger cocycles in the special case 
of a compact, prequantizable, symplectic manifold $(M,\om)$ of dimension $2n$
with prequantum bundle $\pi \colon P \rightarrow M$.
The connection form $\th \in \Omega^1(P)$ is then a contact form on the $(2n+1)$-dimensional 
manifold $P$,
because 
\begin{equation}\label{mu}
\mu={\textstyle \frac{1}{(n+1)!}}\th\wedge (d\th)^n
\end{equation}
is a volume form. The contact manifold
$(P,\th)$ is called the \emph{regular} or \emph{Boothby-Wang} contact manifold associated to 
$(M,\omega)$ \cite{BW}.

The group of quantomorphisms $\Diff(P,\th)$ is a Fr\'echet Lie group. Its  
Lie algebra $\X(P,\th)=\{X\in\X(P):L_X\th=0\}$ can be identified with the 
Poisson Lie algebra $(C^{\infty}(M),\{\,\cdot\,,\,\cdot\,\})$, by means of
the Lie algebra isomorphism 
\[
\X(P,\theta) \rightarrow C^{\infty}(M): \quad X \mapsto \theta(X).
\]
Denote its inverse $C^{\infty}(M) \rightarrow \X(P,\theta)$ by $f \mapsto \zeta_{f}$, uniquely determined by two conditions:
$i_{\ze_f}\th=\pi^*f$ and $i_{\ze_f}d\th=-\pi^*df$.
Note that
$\zeta_f\in\X(P,\th)$ is exact divergence free 
with respect to the volume form \eqref{mu}.

\begin{Proposition}\label{inc}
The group of quantomorphisms $\Diff(P,\th)$ is a subgroup of $\Diff(P,\mu)$,
while its identity component $\Diff(P,\th)_0$ is a subgroup of $\Diff_{\ex}(P,\mu)$.
\end{Proposition}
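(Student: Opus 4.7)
The first inclusion $\Diff(P,\th)\subset\Diff(P,\mu)$ is immediate from the formula $\mu=\frac{1}{(n+1)!}\th\wedge(d\th)^n$: a quantomorphism $\ph$ satisfies $\ph^*\th=\th$, hence also $\ph^*(d\th)=d\th$, and therefore $\ph^*\mu=\mu$. The real content is the second inclusion.

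For $\Diff(P,\th)_0\subset\Diff_{\ex}(P,\mu)$ I would use the characterization of $\Diff_{\ex}(P,\mu)$ as the kernel of Thurston's flux homomorphism, recalled at the start of this subsection. Given $\ph\in\Diff(P,\th)_0$, pick a smooth path $\ph_t$ in the Fr\'echet Lie group $\Diff(P,\th)$ from $\id$ to $\ph$; its generator $X_t=\dot\ph_t\circ\ph_t^{-1}$ lies in $\X(P,\th)$, and under the isomorphism $\X(P,\th)\cong C^\infty(M)$ it takes the form $X_t=\ze_{f_t}$ for a smooth family $f_t\in C^\infty(M)$. Provided each $i_{\ze_{f_t}}\mu$ is exact with a primitive $\alpha_t$ depending smoothly on $t$, integrating in $t$ gives $\int_0^1 i_{X_t}\mu\,dt = d\int_0^1\alpha_t\,dt$, which is exact, so $\Flux(\ph)=0$ and $\ph\in\Diff_{\ex}(P,\mu)$.

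The substantive step is then the Lie algebra inclusion $\X(P,\th)\subset\X_{\ex}(P,\mu)$ that was asserted just before the Proposition. Using $i_{\ze_f}\th=\pi^*f$ and $i_{\ze_f}d\th=-\pi^*df$ together with Cartan calculus, one expands
\[
i_{\ze_f}\mu=\tfrac{1}{(n+1)!}\bigl(\pi^*f\,(d\th)^n+n\,\th\wedge\pi^*df\wedge(d\th)^{n-1}\bigr).
\]
The Leibniz identity $d(\pi^*f\cdot\th\wedge(d\th)^{n-1})=\pi^*f\,(d\th)^n-\th\wedge\pi^*df\wedge(d\th)^{n-1}$ shows that these two closed summands are cohomologous in $H^{2n}_{\dR}(P)$, with common class $\pi^*[f\om^n]$. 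In the Gysin sequence of the $S^1$-bundle $\pi\colon P\to M$, cup product with the Euler class $[\om]$ surjects $H^{2n-2}_{\dR}(M)\twoheadrightarrow H^{2n}_{\dR}(M)\cong\R$ (because $[\om^{n-1}]\cup[\om]=[\om^n]$ generates the top cohomology), so $\pi^*$ vanishes on $H^{2n}_{\dR}(M)$. Hence $\pi^*[f\om^n]=0$ and $i_{\ze_f}\mu$ is exact.

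The main obstacle is precisely this cohomological vanishing: writing down a primitive of $i_{\ze_f}\mu$ by purely algebraic manipulation reduces to finding a primitive of $\pi^*(f\om^n)$ on $P$, and its exactness is a genuine Gysin/top-degree-Lefschetz fact rather than a formal identity. The smooth dependence of the primitive on $f$ can then be arranged using Hodge theory on $M$, after which everything else is formal Cartan calculus on the Boothby--Wang bundle together with the standard definition of the flux.
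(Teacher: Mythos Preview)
The paper does not actually supply a proof of this proposition; it only records the Lie algebra fact ``$\zeta_f\in\X(P,\th)$ is exact divergence free with respect to the volume form \eqref{mu}'' immediately before, and leaves both this and the passage to the group level unjustified. Your argument is therefore not competing against anything in the text, and it is essentially correct: the first inclusion is immediate, and for the second you correctly reduce, via the flux homomorphism, to the Lie algebra inclusion $\X(P,\th)\subset\X_{\ex}(P,\mu)$, which you then establish by a cohomological argument.

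Two small remarks. First, a minor imprecision: the two summands $\pi^*f\,(d\th)^n$ and $n\,\th\wedge\pi^*df\wedge(d\th)^{n-1}$ are not cohomologous to one another for $n>1$; your Leibniz identity shows that the \emph{second} summand has class $n\,\pi^*[f\om^n]$, not $\pi^*[f\om^n]$. This does not affect the conclusion, since both are multiples of $\pi^*[f\om^n]$ and your Gysin argument still kills that class. Second, the Gysin sequence is more machinery than you need. Since $\pi^*\om=d\th$, one has the explicit primitive $\pi^*\om^n=(d\th)^n=d\bigl(\th\wedge(d\th)^{n-1}\bigr)$, so $\pi^*[\om^n]=0$ in $H^{2n}_{\dR}(P)$ by inspection; writing $f\om^n=c\,\om^n+d\beta_f$ on $M$ with $c=\int_M f\om^n/\int_M\om^n$ then gives an explicit primitive $\pi^*\beta_f+c\,\th\wedge(d\th)^{n-1}$ for $\pi^*(f\om^n)$, from which a primitive for $i_{\zeta_f}\mu$ follows algebraically. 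This also makes the smooth (indeed continuous linear) dependence on $f$ transparent, since $f\mapsto\beta_f$ can be taken to be the Green's operator on top forms of $M$ --- exactly your ``Hodge theory on $M$'' remark, but now explicit.
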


Let $Q\in\Gr_{2n-1}(P)$ be a codimension two submanifold of $P$. 
By Theorem~\ref{lint},  the Lichnerowicz cocycle
$\frac{k}{\mathrm{vol}_\mu(P)}\la_Q$ on $\X_{\ex}(P,\mu)$, with $k\in\Z$,
can be integrated to a central extension of $\Diff_{\ex}(P,\mu)$. 
Its pullback along the inclusion $\iota \colon \Diff(P,\theta)_{0} \hookrightarrow \Diff_{\ex}(P,\mu)$
yields a central Lie group extension 
\[
S^1 \rightarrow \widehat{\Diff}(P,\theta)_0 \rightarrow \Diff(P,\theta)_0\,.
\]
The corresponding Lie algebra extension is described by the pullback along 
$\iota_* \circ \ze\colon C^\oo(M)\to\X_{\ex}(P,\mu)$
of the Lichnerowicz cocycle $\frac{k}{\mathrm{vol}_\mu(P)}\la_Q$.
One checks (\cite[Prop.~4.8]{JV}) that 
the resulting cocycle is $-\frac{k}{\mathrm{vol}_{\mu}(P)}\psi_{C}$, with
$\psi_{C}$ the 2-cocycle on $C^{\infty}(M)$ of equation \eqref{rogsing},
associated to the singular $(2n-1)$-cycle $C=\pi_*Q$ 
on $M$.
Using the relation $\vol_\mu(P)=\frac{2\pi}{n+1} \vol_{\om}(M)$ 
with $\vol_{\omega}(M) = \int_{M}\omega^n/n!$ the Liouville volume, we get  
the following integrability theorem:

\begin{Theorem}{\rm (\cite{JV})}\label{thm}
Let $\pi:(P,\th) \rightarrow (M,\om)$ be a prequantum line bundle, and
$Q\subseteq P$ a codimension two embedded submanifold. Then the
Lie algebra extensions defined by integral multiples of the
cocycle $${\textstyle \frac{n+1}{2\pi \,\vol_{\om}(M)}}\ps_{\pi_*Q}$$
integrate to  Lie group extensions of the quantomorphism group $\Diff(M,\theta)_{0}$.
\end{Theorem}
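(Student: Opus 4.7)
The plan is to realize the claimed central extensions as pullbacks, along the inclusion $\iota\colon\Diff(P,\th)_0\hookrightarrow\Diff_{\ex}(P,\mu)$ provided by Proposition \ref{inc}, of the Lichnerowicz-type central extensions of $\Diff_{\ex}(P,\mu)$ whose integrability is already established by Theorem \ref{lint}, and then to match the resulting Lie algebra cocycle on $C^\infty(M)\cong\X(P,\th)$ with the Roger-type cocycle $\ps_{\pi_*Q}$ up to the indicated scalar.

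Concretely, I would proceed in three steps. First, apply Theorem \ref{lint} with the contact volume form $\mu=\frac{1}{(n+1)!}\th\wedge(d\th)^n$ of \eqref{mu} and the codimension two submanifold $Q\subseteq P$: for each $k\in\Z$ this yields a central $S^1$-extension of $\Diff_{\ex}(P,\mu)$ whose infinitesimal cocycle on $\X_{\ex}(P,\mu)$ is $\frac{k}{\vol_\mu(P)}\la_Q$. Second, by Proposition \ref{inc}, $\iota$ is a Fr\'echet Lie group morphism, so pulling back the preceding extension along $\iota$ produces a central $S^1$-extension of $\Diff(P,\th)_0$. Its Lie algebra cocycle is the restriction of $\frac{k}{\vol_\mu(P)}\la_Q$ to $\X(P,\th)$, which we transport to $C^\infty(M)$ via the Lie algebra isomorphism $f\mapsto\zeta_f$.

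Third, compute the pulled-back cocycle on $C^\infty(M)$ explicitly. Using the defining properties $i_{\zeta_f}\th=\pi^*f$ and $i_{\zeta_f}d\th=-\pi^*df$ together with the Leibniz rule for interior products, one expands
\[
\la_Q(\zeta_f,\zeta_g)=\tfrac{1}{(n+1)!}\int_Q i_{\zeta_g}i_{\zeta_f}\bigl(\th\wedge(d\th)^n\bigr),
\]
separates the contributions with and without a $\th$-factor, uses Stokes on the closed submanifold $Q$ to discard exact terms, and pushes forward to the singular cycle $C=\pi_*Q$ on $M$ via $\pi^*\om=d\th$. After combinatorial bookkeeping of the factorials produced by the identities $i_X(d\th)^n=n(i_Xd\th)\wedge(d\th)^{n-1}$ and $i_X(\th\wedge\eta)=(i_X\th)\eta-\th\wedge i_X\eta$, one identifies the result with $-\ps_C(X_f,X_g)$ as in \eqref{rogsing}, so that the pulled-back cocycle equals $-\frac{k}{\vol_\mu(P)}\ps_{\pi_*Q}$. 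Finally, substituting $\vol_\mu(P)=\frac{2\pi}{n+1}\vol_\om(M)$ and absorbing the sign into $k\in\Z$ yields precisely the integer multiples of $\frac{n+1}{2\pi\vol_\om(M)}\ps_{\pi_*Q}$ in the statement.

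The principal obstacle is the explicit computation in the third step: it is combinatorially delicate to isolate the single summand (roughly $n\th\wedge\pi^*(f\,dg)\wedge(d\th)^{n-1}$, after both interior products) whose integral over $Q$ descends to the Roger density $f\,dg\wedge\om^{n-1}/(n-1)!$ against $C=\pi_*Q$, and to verify that all remaining contributions either vanish by a degree/horizontality count on the $S^1$-fibers of $\pi$ or integrate to zero modulo exact terms on the closed $Q$. This bookkeeping, along with the crucial sign, is the content of \cite[Prop.~4.8]{JV}, which supplies the precise coefficient used above.
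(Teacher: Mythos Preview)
Your proposal is correct and follows essentially the same route as the paper: pull back the integrable Lichnerowicz extension of $\Diff_{\ex}(P,\mu)$ from Theorem~\ref{lint} along the inclusion $\iota$ of Proposition~\ref{inc}, identify the resulting Lie algebra cocycle on $C^\infty(M)$ with $-\frac{k}{\vol_\mu(P)}\psi_{\pi_*Q}$ via \cite[Prop.~4.8]{JV}, and convert the constant using $\vol_\mu(P)=\frac{2\pi}{n+1}\vol_\om(M)$. The paper presents exactly this argument in the paragraph preceding the theorem.
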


In other words, the classes $[\psi_{C}]\in H^2(C^{\infty}(M),\R)$ corresponding to the lattice 
\[{\textstyle \frac{n+1}{2\pi\,\vol_{\omega}(M)}} \pi_*H_{n-1}(P,\Z) \subseteq H_{n-1}(M,\R)\]
give rise to integrable cocycles of $C^{\infty}(M)$.
Combining Theorem \ref{thm} with Poincar\'e duality (Proposition \ref{PDforsymp}), 
one sees that the same holds for Roger cocycles $[\psi_{\alpha}]$
with $[\alpha]$ in the lattice
\[
{\textstyle \frac{n+1}{2\pi\,\vol_{\omega}(M)}} \pi_{!}H^{2}_{\mathrm{dR}}(P)_{\Z} 
\subseteq H^{n-1}_{\mathrm{dR}}(M)\,,
\]
where $H_{\dR}^{2}(P)_{\Z}$ denotes the space of integral cohomology classes, 
and the map $\pi_{!} \colon H_{\dR}^{k}(P) \rightarrow H_{\dR}^{k-1}(M)$ is 
given by fibre integration.

\begin{Remark}
By Lie's Second Theorem for regular Lie groups
\cite[\S 40]{KM97}, the Lie algebra homomorphism
$\ka:  \X_{\ham}(M)\to C^{\infty}(M)$
that splits \eqref{back},
\[
\kappa(X_{f}) := f - {\textstyle \frac{1}{\mathrm{vol}_\om(M)}}\int_{M} f \omega^n/n!\,,
\]
integrates to a group homomorphism on the universal covering group 
of the group of Hamiltonian diffeomorphisms:
\[
K:\widetilde\Diff_{\ham}(M, \om) \rightarrow \Diff(P,\th)_0\,.
\]
The above extensions of the quantomorphism group  can be pulled back by  $K$ to obtain extensions of the universal covering group $\widetilde\Diff_{\ham}(M, \om)$ that integrate the cocycles $\frac{n+1}{2\pi\,\vol_\om(M)}\ps_\al$ and $\frac{n+1}{2\pi\,\vol_\om(M)}\ps_C$, 
subject to the integrality conditions 
$[C]\in \pi_*H_{2n-1}(P,\Z)$ and $[\al]\in\pi_{!} H_{\dR}^{2}(P)_{\Z}$. 
\end{Remark}


\end{document}